\newtheorem{thm}[subsection]{Theorem}
\newtheorem{definition}[subsection]{Definition}
\newtheorem{proposition}[subsection]{Proposition}
\newtheorem{corollary}[subsection]{Corollary}
\newtheorem{lemma}[subsection]{Lemma}
\newtheorem{conjecture}[subsection]{Conjecture}
\newtheorem{remark}[subsection]{Remark}
\theoremstyle{definition}
\numberwithin{equation}{section}
\def\bpartial{{\bar\partial}}
\def\bra{\langle}
\def\ket{\rangle}
\def\cS{{\mathcal S}}
\def\bS{{\mathcal S_+}}
\def\cE{{\mathcal E}}
\def\cW{{\mathcal W}}
\def\bW{{\mathcal W_+ }}
\def\cO{{\cal O}}
\def\cA{{\cal A}}
\def\cA{{\mathcal A}}
\def\cE{{\mathcal E}}
\def\cL{{\mathcal L}}
\def\cO{{\mathcal O}}
\DeclareMathOperator{\Hom}{Hom}
\DeclareMathOperator{\Sym}{Sym}
\DeclareMathOperator{\End}{End}
\DeclareMathOperator{\Ker}{Ker}
\newfont{\german}{eufm10}
\begin{document}
\pagestyle{plain}

\title{ The global sections of chiral de Rham complexes on compact Ricci-flat K\"ahler manifolds}

\thanks{The author is supported  by NSFC No. 11771416.}

\author{Bailin Song}

\address{School of Mathematical Sciences, University of Science and
Technology of China, Hefei, Anhui 230026, P.R. China}
\email{bailinso@ustc.edu.cn}

\begin{abstract}
The space of the global sections of the chiral de Rham complex on a compact Ricci-flat K\"ahler manifold is calculated and it is expressed as the subspace of invariant elements in a $\beta\gamma-bc$ system
under the action of certain Lie algebra of Cartan type.
  \end{abstract}

\keywords{chiral de Rham complex; cohomology; global section; Calabi-Yau manifold}

\maketitle
\tableofcontents
\section{Introduction}
In 1998, Malikov et al. \cite{MSV} constructed a sheaf of vertex algebras $\Omega^{ch}_X$ called the chiral
de Rham complex on a complex manifold $X$. It is graded by conformal weight, and the weight zero piece
coincides with the ordinary de Rham sheaf. According to \cite{Kap}, if $X$ is a Calabi-Yau manifold, its cohomology $H^*(X,\Omega_X^{ch})$, which is called \textsl{chiral Hodge cohomology},
can be identified with the infinite-volume limit of the half-twisted sigma model defined by E. Witten. This construction has substantial applications
to mirror symmetry: Borisov established a relation between these sheaves of vertex algebras for mirror
Calabi-Yau hypersurfaces and complete intersections in toric varieties in \cite{Bor}.
We will study  the chiral Hodge cohomology  $H^*(X, \Omega^{ch}_X)$ of $X$, especially, $H^0(X,\Omega^{ch}_X)$, the space of the global sections of $\Omega^{ch}_X$.
There is a chiral Poncar\'e duality theorem in \cite{MS2}: the space  $H^*(X, \Omega^{ch}_X)$ carries a canonical non-degenerate
bilinear form, which coincides with the usual Poincar\'e  pairing when restricted to the conformal weight zero piece.
Certain geometric structures on
$X$ give rise to interesting structures of $H^0(X,\Omega^{ch}_X)$. For example, if $X$ is a Calabi-Yau manifold,
$H^0(X,\Omega^{ch}_X)$ contains a topological vertex algebra structure, or equivalently, an $\mathcal N=2$
superconformal structure \cite{MSV}. If $X$ is a hyperK\"ahler manifold, $H^0(X,\Omega^{ch}_X)$ contains an $\mathcal N=4$ superconformal
structure with central charge $c=3d$, where $d$ is the complex dimension of $X$ \cite{Bor}\cite{H}. Describing the
vertex algebra $H^0(X,\Omega^{ch}_X)$ is a difficult problem. In \cite{MS}, when $X$ is projective space, there is a description of  $H^0(X,\Omega^{ch}_X)$.
In \cite{S}\cite{S1}, we  gave a complete description of
$H^0(X,\Omega^{ch}_X)$ when $X$ is a K3 surface: it is isomorphic
to the simple $\mathcal N =4$ superconformal vertex algebra with central charge $c=6$. In this paper (Theorem \ref{thm:global}),
when $X$ is a compact Ricci-flat K\"ahler manifold with holonomy group $SU(d)$ or $Sp(\frac d 2)$,  we will show that $H^0(X,\Omega^{ch}_X)$ is isomorphic to the subspace of invariant elements of a $\beta\gamma-bc$ system
under the action of certain Lie algebra of Cartan type. This Lie algebra is determinde by the Holonomy group of $X$. In particular, $H^0(X,\Omega^{ch}_X)$ is only dependent on the dimension of $X$ and its holonomy group. This can be regard as a vertex algebra anolog of Corollary {\ref{cor:isorx}}.

The calculation of $H^0(X,\Omega^{ch}_X)$ is through the following steps. We first find a soft resolution $(\Omega^{ch,*}_X, \bpartial)$ of the chiral de Rham complex $\Omega^{ch}_X$. Then we construct an isomorphism of complexes of sheaves from  $(\Omega^{0,*}_X(SW(\bar T^*X)), \bar D)$ to $(\Omega^{ch,*}_X, \bpartial)$ with
$SW(\bar T^*X)$ is isomorphic to
\begin{equation}\label{eqn:isoE}
\Sym^*(\bigoplus_{n=1}^\infty\bar TX)\bigotimes \Sym^*(\bigoplus_{n=1}^\infty\bar T^*X)\bigotimes \wedge^*(\bigoplus_{n=1}^\infty\bar TX)\bigotimes \wedge^*(\bigoplus_{n=1}^\infty\bar T^*X)
\end{equation}
as antiholomorphic vector bundles. Here $\bar D$ is an elliptic operator, and $\bar TX$ and $\bar T^*X$ are antiholomorphic tangent and cotangent bundle of $X$, respectively. At last, we
% use the Bochner technique (see chapter 7 of \cite{P} for example) to 
show that if the holonomy group of $X$ is $SU(d)$ with holomorphic volume form $w_0$, $H^0(X,\Omega^{ch}_X)$ is isomorphic to $\cW( T_xX)^{\mathcal Vect( T_xX,w_0|_x)}$ by restricting the sections to a point $x\in X$. Here $TX$ is the holomorphic tangent bundle of $X$ and $T_xX$ is its fibre on $x\in X$. $\cW( T_xX)$ is the $\beta\gamma-bc$ system on $T_xX$ and $\mathcal Vect( T_xX,w_0|_x)$ is the space of the algebraic vector fields on $T_xX$ which preserve $w_0|_x$. We have the similar result if the holonomy group of $X$ is $Sp(\frac d 2)$.
%% These results can be regarded as vertex algebra
%%analog of a theorem of Bochner (\cite{YB}, p. 142), which says that holomorphic sections
%%of the vector bundle given by tensors of tangent and cotangent bundles on a compact %%K\"ahler manifold
%%with Ricci tensor zero are exactly the parallel sections.

The plan of the paper is following: in section \ref{sec:two}, we introduce the $\beta\gamma-bc$ system and an Hermitian form on its subspace; in section \ref{sec:2.5}, we introduce the Lie algebras of Cartan type, their actions on the $\beta\gamma-bc$ systems and the invariants under the actions; in section \ref{sec:three}, we introduce the chiral de Rham complex and chiral Hodge cohomology; in section \ref{sec:four}, we introduce a harmonic theory for
$H^*(X,\Omega^{ch}_X)$; in section \ref{sec:six}, we calculate the space of global sections of chiral de Rham complex of the compact Ricci-flat K\"ahler manifold.

\section{$\beta\gamma-bc$ system}\label{sec:two}
\subsection{Vertex algebra}
In this paper, we will follow the formalism of the vertex algebra developed in \cite{Kac}. A vertex
algebra is the data $(\cA, Y, L_{-1}, 1)$. Here, $\cA $ is a $\mathbb Z_2$-graded vector space over $\mathbb C$. The $\mathbb Z_2$-grading is called parity.
For $a\in \cA$, its parity will be denoted by $|a|$.
Let $z$ and $w$ are formal variables. For an element $a\in \cA$,
$$a(z)=\sum_{n\in \mathbb Z}a_{(n)}z^{-n-1}$$
is the power series $Y(a)$. It is called the field corresponding to $a$.
$1 \in \cA$ is called vacuum. $Y$ is an even linear map
$$Y : \cA \to \End (\cA) [[z, z^{-1}]].$$
$L_{-1}$ is an even endomorphism of $\cA$.
They
satisfy the following axioms:
\begin{itemize}
	\item
	\textsl{Vacuum axiom.} $L_{-1}1 =0$; $1(z)= Id$; for $a\in \cA$, $n\geq 0$, $a_{(n)}1=0$ and $a_{(-1)}1=a$;
	\item \textsl{Translation invariance axiom.} For $a\in \cA$, $[L_{-1}, Y(a)] =\partial a(z)$;
	\item \textsl{Locality axiom.} For any $a, b \in \cA$, $(z-w)^k [a(z), b(w)]=0$ for some $k\geq 0$.
\end{itemize}
For $a,b\in \cA$, $n\in \mathbb Z$, $a_{(n)}b$ is their $n$-th product and  their operator product expansion (OPE) is
$$a(z)b(w)\sim \sum_{n\geq 0}\frac{(a_{(n)}b)(w)}{(z-w)^{n+1}}.$$
The Wick product of $a(z)$ and $b(z)$ is
$:a(z)b(z):=(a_{(-1)}b)(z)$. The other negative products are given by
$$:\partial^na(z)b(z):=n!(a_{(-n-1)}b)(z).$$
For $a_1, \cdots , a_k\in \cA$, their iterated Wick product is defined to be
$$:a_1(z)\cdots a_k(z):=:a_1(z)b(z):,\quad \quad b(z)=:a_2(z)\cdots a_k(z):.$$
We often omit the formal variable $z$ when no confusion can arise.

The following two equations are often used in this paper. For $a,b\in \cA$,
\begin{eqnarray}\label{eq:circp}
:ab:_{(n)}&=&\sum_{k<0}a_{(k)}b_{(n-k-1)}+(-1)^{|a||b|}\sum_{k\geq 0}b_{(n-k-1)}a_{(k)};\\
\label{eq:circpcom} a_{(n)}b&=&\sum_{k\in \mathbb Z}(-1)^{k+1}(-1)^{|a||b|}(a_{(k)}b)_{(n-k-1)}1.
\end{eqnarray}

\subsection{$\beta\gamma-bc$ system}
Let $V$ be a $d$ dimensional complex vector space.
The $\beta\gamma$-system  $\cS(V)$ and $bc$-system $\cE(V)$ were introduced in \cite{FMS}.
The $\beta\gamma$-system  $\cS(V)$ is generated by even elements $\beta^{x'}(z)$, $x' \in V$ and
$\gamma^{x}(z),x\in V^*$. The nontrivial OPEs among these generators are
$$ \beta^{x'}(z)\gamma^{x}(w)\sim {\bra x,x'\ket}{(z-w)}^{-1}.$$
The $bc$-system $\cE(V)$ is generated by odd elements $b^{x'}(z)$, $x'\in V$ and
$c^x(z),x\in V^*$. The nontrivial OPEs among these generators are
$$ b^{x'}(z)c^{x}(w)\sim {\bra x,x'\ket}{(z-w)}^{-1}.$$
Here for $P=\beta,\gamma,b$ or $c$, we assume $a_1 P^{x_1}+a_2P^{x_2}=P^{a_1x_1+a_2x_2}$.

Let $$\cW(V):=\cS(V)\otimes\cE(V).$$
Let $\alpha^{x}=\partial \gamma^{x}$.
$\beta^{x'}$ and $\alpha^{x}$  have OPE
$$ \beta^{x'}(z)\alpha^{x}(w)\sim {\bra x,x'\ket}{(z-w)}^{-2}.$$
Let $\bS (V)$ be the subalgebra of $\cS(V)$ generated by $\beta^{x'}$ and $\alpha^{x}$,  $\bS(V)$ is a system of free bosons. Let
$$\bW(V):=\bS(V)\otimes \cE(V).$$

If $V'$ is a vector space, $\psi:V\to V'$ is a linear isomorphism. Let 
$\psi^*: V'^*\to V^*$ be its induced map of their dual spaces. $\psi$ induce an isomorphism of vertex algebra 
$$\cW(\psi):\cW(V)\to \cW(V'),$$
$$ \beta^{x'}\mapsto \beta^{\psi(x')},b^{x'}\mapsto b^{\psi(x')},\gamma^x\mapsto \gamma^{(\psi^*)^{-1}(x)},c^x\mapsto c^{(\psi^*)^{-1}(x)}.$$
$\cW(\psi)$ gives $\cW_+(V)\cong \cW_+(V')$. 
\subsection{An Hermitian form on $\cW_+(V)$}
Given a positive definite Hermitian form on $V$, fix $x'_1,\cdots x'_d$, an orthonormal basis of $V$ and let $x_1,\cdots x_d$ be its dual basis of $V^*$.
Let $S_0$  be the set of $ \beta_{(n)}^{x'_i},\alpha^{x_i}_{(n)}, b_{(n)}^{x'_i}, c_{(n)}^{x_i}$, $1\leq i\leq d$, $n< 0$.
These operators are super commutative.
Let $SW(V)=\mathbb C[S_0]$ be the algebra generated by these operators. There is a canonical  isomorphism of $SW(V)\otimes_{\mathbb C}\mathbb C[\gamma^{x_1}_{(-1)}] $ modules,
\begin{eqnarray*}\tilde \pi: &SW(V)\otimes_{\mathbb C}\mathbb C[\gamma^{x_1}_{(-1)},\cdots, \gamma^{x_d}_{(-1)}] & \to  \mathcal W(V),\\
	&a \otimes f &\mapsto af1,
\end{eqnarray*}
by acting the operators on the unit of $\mathcal W(V)$.  $\cW(V)$ is a free module of $\mathbb C[\gamma^{x_1}_{(-1)},\cdots, \gamma^{x_d}_{(-1)}]$.
Restricting $\tilde \pi$ on $SW(V)\otimes \{1\}$, we get an isomorphism of $SW(V)$ modules,
\begin{equation}\label{eqn:isopi}
\pi:SW(V)\to \bW(V),\quad a\mapsto a 1.
\end{equation}

%Let $\bE(V)$ be the subalgebra of $\cE$ generated by $b^x$ and $\partial c^x$ and
%$$\bbW(V):=\bS(V)\otimes \bE(V).$$

\begin{lemma}\label{lem:hermitianform}
	$\bW(V)$ is equipped with a unique positive definite Hermitian form $(-,-)$ with the following property:
	\begin{eqnarray}\label{eqn:propherm}
	&(1,1)=1;\nonumber\\
	&(\beta_{(n)}^{x'_i}a, a')=(a,\alpha^{x_i}_{(-n)}a'), &\text{for any } n\in\mathbb Z , n\neq 0, \forall a,a'\in  \bW(V); \\
	&(b^{x'_i}_{(n)}a, a')=(a, c^{x_i}_{(-n-1)}a'), &\text{for any } n\in\mathbb Z, \forall a, a'\in  \bW(V).\nonumber
	\end{eqnarray}
\end{lemma}
\begin{proof}
	$SW(V)$ has  a linear basis given by
	$$ \frac 1 {\sqrt{s_1!s_2!\cdots s_k!}} a_1^{s_1}a_2^{s_2}\cdots a_k^{s_k}, \quad k \geq 0, s_i\geq 0. $$
	Here $$ a_1,\cdots, a_k \in \{\frac 1{\sqrt{-n} }\beta_{(n)}^{x'_i},\frac 1{\sqrt {-n}}\alpha^{x_i}_{(n)}, b_{(n)}^{x'_i}, c_{(n)}^{x_i}\}_{n<0}
	\text{ are different to each other}.$$
	There is  a positive definite Hermitian form on $SW(V)$ such that the above linear basis is orthonormal.
	Then we get a positive definite Hermitian form  $(-,-)$ on $\bW(V)$ through the linear isomorphism $\pi$,
	which satisfies Equations (\ref{eqn:propherm}).
	
	The uniqueness of the Hermitian form   is obvious.
\end{proof}
\begin{remark}\label{rem:uniqure}
	The Hermitian form in the above lemma only depends on the Hermitian form on $V$. It does not depend on the orthnormal
	basis $x'_1,\cdots x'_d$ we choose. In the proof of Lemma \ref{lem:hermitianform}, we also get a positive definite Hermitian form
	on $SW(V)$.
\end{remark}
\subsection{Subalgebras of $\cW_+(V)$}
Let
\begin{align}\label{eqn:globlesection0}
&Q(z)= \sum_{i=1}^d:\beta^{x'_i}(z)c^{x_i}(z):,& &L(z)=\sum_{i=1}^d(:\beta^{x'_i}(z)\partial\gamma^{x_i}(z):-:b^{x'_i}(z)\partial c^{x_i}(z):),&\\
&J(z)=-\sum_{i=1}^d:b^{x'_i}(z)c^{x_i}(z):,& &G(z)=\sum_{i=1}^d:b^{x'_i}(z)\partial\gamma^{x_i}(z):,&\nonumber
\end{align}
Because  $\beta_{(0)}=(\partial\gamma)_{(0)}=0$ on $\bW(V)$,
we have
\begin{align}\label{eqn:conjugate0}
&Q_{(n)}^*= G_{(-n+1)},& &J_{(n)}^*= J_{(-n)},&\\
&L_{(n)}^*= L_{(-n+2)}-(n-1)J_{(-n+1)}.& & &\nonumber
\end{align}
So $\bW(V)$ is a unitary representation of the vertex algebra generated by $Q, L, J, G$, which is called a topological vertex algebra.

Let
\begin{eqnarray}\label{eqn:globlesection1}
&D(z)= : b^{x'_1}(z)b^{x'_2}(z)\cdots b^{x'_d}(z):,& E(z)=:c^{x_1}(z)c^{x_2}(z)\cdots c^{x_d}(z):\\
&B(z)=Q(z)_{(0)}D(z),\quad\quad\quad\quad\quad & C(z)=G(z)_{(0)}E(z). \nonumber
\end{eqnarray}

We have
\begin{equation}\label{eqn:conjugate1}
D_{(n)}^*= (-1)^{\frac {d(d-1)}2 }E_{(d-2-n)}. 
\end{equation}
If $d=2l$ is even, let 
\begin{eqnarray}\label{eqn:globlesection2}
&D'(z)= \sum_{i=1}^l : b^{x'_{2i-1}}(z)b^{x'_{2i}}(z):,& E'(z)=\sum_{i=1}^l : c^{x_{2i-1}}(z)c^{x_{2i}}(z):\\
&B'(z)=Q(z)_{(0)}D'(z),\quad\quad\quad\quad\quad & C'(z)=G(z)_{(0)}E'(z).\nonumber
\end{eqnarray}
We have
\begin{equation}\label{eqn:conjugate2}
{D'}_{(n)}^*= -{E'}_{(-n)}.
\end{equation}
\begin{definition}
	Let $\cA_0(V)$  be the vertex algebra generated by $Q, L, J, G, B, C, D$ and $E$. Let $\cA_1(V)$  be the vertex algebra generated by $Q, L, J, G, B', C', D'$ and $E'$.
\end{definition}
By Equation (\ref{eqn:conjugate0},\ref{eqn:conjugate1},\ref{eqn:conjugate2}), $\bW(V)$ is a unitary representation of $\cA_0(V)$ and $\cA_1(V)$. 
$\cA_1(V)$ is the simple $\mathcal N =4$ superconformal vertex algebra with central charge $c=3d$. 
It is strongly generated by $Q, L, J, G, B', C', D'$ and $E'$.

Since the Wick product of $l$ copies of $D'(z)$ is $l! D(z)$ and the Wick product of $l$ copies of $E'(z)$ is $l! E(z)$, $\cA_0(V)$ is a subalgebra of $\cA_1(V)$. If $d=2$, $\cA_0(V)=\cA_1(V)$.

$\bW(V)$ is graded by conformal weights $k$ and fermion number $l$:
$$\bW(V)=\bigoplus_{k\in \mathbb Z_{\geq 0}}\bigoplus_{l\in\mathbb Z} \bW(V)[k,l].$$
$$\bW(V)[k,l]=\{A\in \bW(V)| L_{(1)}A=k A ,  J_{(0)}A=lA\}$$
$\bW(V)[k,l]$ are finite dimensional vector spaces and perpendicular to each other under the Hermitian metric given in
Lemma \ref{lem:hermitianform}.

\section{Lie algebras of Cartan type and their action on $\beta\gamma-bc$ system}
\label{sec:2.5}
\subsection{Lie algebras of Cartan type}
The space of algebraic vector fields on $V$ is  a graded Lie algebra $$\mathcal Vect(V)=\oplus_{n\geq -1}\mathcal Vect_n(V),\quad \mathcal Vect_n(V)=\Sym^{n+1}(V^*)\otimes V.$$
If $(x_1,\cdots x_d)$ is a basis of $V^*$, then any element $v\in \mathcal Vect_n(V)$ can be written as $v=\sum_{i=1}^d P_i\frac{\partial}{\partial x_i}$, where $P_i$ is a homogeneous
polynomial of degree $n+1$. For
$\sum_{i=1}^d P_i\frac{\partial}{\partial x_i}\in \mathcal Vect_n(V, \omega_0)$ and $\sum_{j=1}^d P'_j\frac{\partial}{\partial x_j}\in \mathcal Vect^A_m(V)$,
$$[\sum_{i=1}^d P_i\frac{\partial}{\partial x_i},\sum_{j=1}^d P'_j\frac{\partial}{\partial x_j}]
= \sum_{i,j}( P_i\frac{\partial P'_j}{\partial x_i}\frac{\partial}{\partial x_j} -P'_j\frac{\partial P_i}{\partial x_j}\frac{\partial}{\partial x_i} )\in \mathcal Vect_{n+m}(V).$$
This Lie algebra is called the \textsl{general series}.
For a $k$ form $\omega\in \wedge^kV^*$, let
\begin{eqnarray*}
	\mathcal Vect_n(V,\omega)&=&\{v\in \mathcal Vect_n(V)|L_v \omega=0\},\\
	\mathcal Vect(V,\omega)&=&\bigoplus_{n\geq -1}\mathcal Vect_n(V,\omega),\\
	\mathcal Vect_\geq(V,\omega)&=&\bigoplus_{n\geq 0}\mathcal Vect_n(V,\omega).
\end{eqnarray*}
Here $L_v$ is the Lie derivative of $v$.
\begin{lemma}$\mathcal Vect(V,\omega)$ and $\mathcal Vect_\geq(V,\omega)$ are graded Lie subalgebras of $\mathcal Vect(V)$.
\end{lemma}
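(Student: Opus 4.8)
The plan is to reduce everything to the single classical identity
$$L_{[v,w]} = L_v L_w - L_w L_v$$
for the Lie derivative acting on differential forms, valid for any two polynomial vector fields $v,w$ on $V$ with the bracket convention used above. Granting this identity, closure of $\mathcal Vect(V,\omega)$ under the bracket is immediate: if $v,w$ both satisfy $L_v\omega = L_w\omega = 0$, then $L_{[v,w]}\omega = L_v(L_w\omega) - L_w(L_v\omega) = 0$, so $[v,w]$ again annihilates $\omega$. The remaining structure of a Lie algebra (bilinearity, antisymmetry, the Jacobi identity) is inherited from $\mathcal Vect(V)$ and needs no separate argument.

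First I would record that the bracket respects the grading. The explicit bracket formula displayed above already shows $[\mathcal Vect_n(V),\mathcal Vect_m(V)]\subseteq \mathcal Vect_{n+m}(V)$, and since both summands defining $\mathcal Vect(V,\omega)$ range over $n\geq 0$, the bracket of a degree-$n$ and a degree-$m$ element lands in degree $n+m\geq 0$, so the truncation at nonnegative degrees causes no trouble. Combining this with the annihilation computation of the previous paragraph gives $[\mathcal Vect_n(V,\omega),\mathcal Vect_m(V,\omega)]\subseteq \mathcal Vect_{n+m}(V,\omega)$. Since $\mathcal Vect(V,\omega)$ is by definition the direct sum of its homogeneous pieces $\mathcal Vect_n(V,\omega) = \mathcal Vect(V,\omega)\cap \mathcal Vect_n(V)$, this is precisely the assertion that it is a \emph{graded} Lie subalgebra.

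The only point needing care — and the closest thing to an obstacle — is justifying the commutator identity $L_{[v,w]}=[L_v,L_w]$ in the present algebraic setting. Since $\omega\in\wedge^kV^*$ has constant coefficients we have $d\omega=0$, so Cartan's formula reduces $L_v\omega$ to $d\,\iota_v\omega$; one may therefore either invoke the standard smooth-manifold identity, polynomial fields and forms being in particular smooth on $V\cong\mathbb C^N$, or verify it directly on the graded algebra of polynomial differential forms by writing $v=\sum_i P_i\,\frac{\partial}{\partial x_i}$ and checking the identity on the generators $x_i$ and $dx_i$, where it follows from the definition of the bracket. Either route is routine, and no genuine difficulty arises beyond the degree bookkeeping already handled above.
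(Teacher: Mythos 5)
Your proof is correct and rests on exactly the same identity as the paper's, namely $L_{[v_1,v_2]}\omega=[L_{v_1},L_{v_2}]\omega=0$; the paper's argument is just a terser version of yours. The extra care you take with the grading bookkeeping and with justifying the commutator identity for Lie derivatives is fine but not a different route.
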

\begin{proof}
	If
	$v_1\in \mathcal Vect_n(V,\omega)$ and $v_2\in \mathcal Vect_m(V,\omega)$, then
	$$L_{[v_1,v_2]}\omega=[L_{v_1},L_{v_2}]\omega=0.$$ So $[v_1,v_2]\in \mathcal Vect_{n+m}(V,\omega)$.
	$\mathcal Vect(V,\omega)$ and $\mathcal Vect_\geq (V,\omega)$ are graded Lie subalgebras of $\mathcal Vect(V)$.
\end{proof}

We consider the Lie algebras $\mathcal Vect(V,\omega)$ and $\mathcal Vect_\geq(V,\omega)$ for particular $\omega$.\\
If $\omega_0=dx_1\wedge\cdots\wedge dx_d$,
$$ \mathcal Vect_n(V,\omega_0)
=\{\sum_{i=1}^d P_i\frac{\partial}{\partial x_i}\in \mathcal Vect_n(V)|\sum \frac{\partial}{\partial x_i}P_i=0\}.$$ The Lie algebra  $\mathcal Vect(V,\omega)$ is called the \textsl{special series}.\\
If $d=2l$ is even and $\omega_1=\sum_{i=1}^ldx_{2i-1}\wedge dx_{2i}$.
The Lie algebra $\mathcal Vect(V,\omega_1)$ is called the \textsl{Hamiltonian series}.\\
If  $d=2l+1$ and $\omega = dx_{2l+1}+\sum_{i=1}^l( x_{l+i}dx_i-x_idx_{l+i})$. The Lie algebra
$$\{v\in Vect(V)|L_v\omega=P\omega, P\in \Sym^*( V^*)\}$$ is called the \textsl{contact series}.

The general series, special series, Hamiltonian series and contact series are called the Lie algebras of
Cartan type\footnote{Thank Yufeng Pei for pointing out this for me.} and  constitute an important class of simple infinite dimensional Lie algebras. In this paper, we consider the
special series and Hamiltonian series.
\begin{lemma}\label{lem:propertyTA}For special series, we have the following properties:
	\begin{enumerate}
		\item
		$\mathcal Vect_0(V, \omega_0)$ is  isomorphic to the simple Lie algebra $\mathfrak{sl}_{d}(\mathbb C)$ of type $A_{d-1}$;
		\item $\mathcal Vect_n(V, \omega_0)$ is an irreducible representation of
		$\mathcal Vect_0(V, \omega_0)$;
		\item $\mathcal Vect_\geq(V, \omega_0)$ is generated by $\mathcal Vect_0(V, \omega_0)$ and any non zero element of $\mathcal Vect_1(V, \omega_0).$
	\end{enumerate}
\end{lemma}
\begin{proof}
	\begin{enumerate}
		\item
		Elements in $\mathcal Vect_0(V, \omega_0)$ are exactly elements in $V^*\otimes V=\Hom(V,V)$ with their traces vanish. So $\mathcal Vect_0(V, \omega_0)\cong \mathfrak{sl}_{d}(\mathbb C)$.
		\item
		$\mathcal Vect_n(V, \omega_0)$ is the kernel of the contraction
		$$\Sym^{n+1}(V^*)\otimes V \to \Sym^{n}(V^*).$$
		When $n\geq 0$, the contraction map is surjective. By Weyl's dimension formula, $\mathcal Vect_n(V, \omega_0)$ is an irreducible representation of $\mathfrak{sl}_{d}(\mathbb C)$.
		\item Since $\mathcal Vect_n(V, \omega_0)$ is an irreducible representation of $\mathcal Vect_0(V, \omega_0)$, it is generated by a non zero element and $\mathcal Vect_0(V, \omega_0)$.
		$x_1^{n}\frac{\partial}{\partial x_2}\in \mathcal Vect^A_{n-1}(V)$ and $ x_2^2\frac{\partial}{\partial x_1}\in \mathcal Vect_1(V, \omega_0)$,
		$$[x_1^{n}\frac{\partial}{\partial x_2}, x_2^2\frac{\partial}{\partial x_1}]=2x_2x_1^{n}\frac{\partial}{\partial x_1}-nx_1^{n-1}x_2^2\frac{\partial}{\partial x_2}\neq 0.$$
		So $\mathcal Vect_1(V, \omega_0)$, $\mathcal Vect_{n-1}(V,\omega_0)$ and $\mathcal Vect_0(V, \omega_0)$ generate $\mathcal Vect_n(V, \omega_0)$. By induction,  $\mathcal Vect_1(V, \omega_0)$ and $\mathcal Vect_0(V, \omega_0)$ generate
		$\mathcal Vect(V, \omega_0)$. Since $\mathcal Vect_1(V, \omega_0)$ is an irreducible representation of $\mathcal Vect_0(V, \omega_0)$,
		$\mathcal Vect(V, \omega_0)$ is generated by $\mathcal Vect_0(V, \omega_0)$ and any non zero element of $\mathcal Vect_1(V, \omega_0).$
	\end{enumerate}
\end{proof}

\begin{lemma}\label{lem:propertyTC}  For Hamiltonian series, we have the following properties:
	\begin{enumerate}
		\item
		$\mathcal Vect_0(V, \omega_1)$ is isomorphic to the simple Lie algebra $\mathfrak{sp}_{d}(\mathbb C)$ of type $C_{l}$;
		\item
		$\mathcal Vect_n(V, \omega_1)$ is an irreducible representation of
		$\mathcal Vect_0(V, \omega_1)$;
		\item $\mathcal Vect_\geq (V,\omega_1)$ is generated by $\mathcal Vect_0(V, \omega_1)$ and any non zero element of $\mathcal Vect_1(V, \omega_1).$
	\end{enumerate}
\end{lemma}
\begin{proof}
	\begin{enumerate}
		\item This is straightforward.
		\item  There is an isomorphism $V\to V^*$, $v\mapsto \iota_v\omega$ by the contraction with $\omega$. It induces an isomorphism $\mathcal Vect_n(V, \omega_1)\cong \Sym^{n+2}(V^*)$ of the representation of $\mathfrak{sp}_{d}(\mathbb C)$.
		So $\mathcal Vect_n(V, \omega_1)$ is an irreducible representation of $\mathfrak{sp}_{d}(\mathbb C)$.
		\item The proof is the same as the proof of Lemma \ref{lem:propertyTA}(3).
	\end{enumerate}
\end{proof}
\subsection{ The actions of Lie algebras of Cartan type on $\beta\gamma-bc$ systems}
$\mathcal Vect(V)$ has a canonical action on $\mathcal W(V)$ according to the part III of \cite{MS}. Let  $\mathcal L: \mathcal Vect(V)\to Der(\cW(V))$, which is given by
\begin{equation}\label{eqn:actionL}
\mathcal L(\sum_iP_i(x_1,\cdots, x_d)\frac{\partial}{\partial x_i})= \sum_i(Q_{(0)} :P_i(\gamma^{x_1},\cdots \gamma^{x_d})b^{x_i'}:)_{(0)}.
\end{equation}
By a direct calculation, we have
\begin{lemma}\label{lem:Liehom}
	$\mathcal L$ is a homomorphism of Lie algebras.
\end{lemma}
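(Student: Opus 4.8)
The plan is to exploit the BRST-type structure built into the definition. For a vector field $v=\sum_i P_i\frac{\partial}{\partial x_i}$ set $\mu(v):=\sum_i:P_i(\gamma^{x_1},\dots,\gamma^{x_N})b^{x'_i}:$, so that the definition of $\mathcal L$ reads $\mathcal L(v)=(Q_{(0)}\mu(v))_{(0)}$; by the standard mode identity $(A_{(0)}B)_{(0)}=[A_{(0)},B_{(0)}]$ this is the graded commutator $\mathcal L(v)=[Q_{(0)},\mu(v)_{(0)}]$. The whole lemma will then reduce to two facts: that $Q_{(0)}$ is a square-zero derivation, and that $\mathcal L(v_1)\mu(v_2)=\mu([v_1,v_2])$.

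First I would check that $Q(z)Q(w)\sim 0$. Since $Q=\sum_i:\beta^{x'_i}c^{x_i}:$ involves only the fields $\beta$ and $c$, while the only nonzero OPEs pair $\beta$ with $\gamma$ and $b$ with $c$, the product of the two copies of $Q$ has no contractions. Thus $Q_{(0)}Q=0$, whence $Q_{(0)}^2=\tfrac12(Q_{(0)}Q)_{(0)}=0$: the odd operator $Q_{(0)}$ is a differential on $\cW(V)$. Using the graded Jacobi identity this immediately gives $[Q_{(0)},\mathcal L(v)]=[Q_{(0)},[Q_{(0)},\mu(v)_{(0)}]]=\tfrac12[[Q_{(0)},Q_{(0)}],\mu(v)_{(0)}]=0$, i.e.\ every $\mathcal L(v)$ commutes with $Q_{(0)}$.

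The core computation is to evaluate $\mathcal L(v)$ on generators. As a zero mode $\mathcal L(v)$ is a derivation of all products, so it suffices to know its values on $\gamma^{x_k}$ and $b^{x'_i}$. From $Q_{(0)}\gamma^{x_k}=c^{x_k}$, $Q_{(0)}b^{x'_i}=\beta^{x'_i}$, $Q_{(0)}\beta^{x'}=Q_{(0)}c^{x}=0$, together with $\mu(v)_{(0)}c^{x_k}=P_k(\gamma)$ and $\mu(v)_{(0)}\beta^{x'_i}=-\sum_k:\frac{\partial P_k}{\partial x_i}(\gamma)b^{x'_k}:$, one finds $\mathcal L(v)\gamma^{x_k}=P_k(\gamma)$ and $\mathcal L(v)b^{x'_i}=-\sum_k:\frac{\partial P_k}{\partial x_i}(\gamma)b^{x'_k}:$. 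These are exactly the classical action of $v$ on the coordinate functions and the action on the coordinate vector field $\partial/\partial x_i$, namely $[v,\partial/\partial x_i]$. Applying the Leibniz rule for $\mathcal L(v_1)$ to $\mu(v_2)=\sum_i:P^{(2)}_i(\gamma)b^{x'_i}:$ and collecting the coefficient of each $b^{x'_i}$ produces exactly $\sum_j(P^{(1)}_j\frac{\partial P^{(2)}_i}{\partial x_j}-P^{(2)}_j\frac{\partial P^{(1)}_i}{\partial x_j})$, the $i$-th component of $[v_1,v_2]$ in the bracket of $\mathcal Vect(V)$. Hence $\mathcal L(v_1)\mu(v_2)=\mu([v_1,v_2])$.

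Finally I would assemble these, using that the zero mode $\mathcal L(v_1)$ is a derivation (so $[\mathcal L(v_1),\mu(v_2)_{(0)}]=(\mathcal L(v_1)\mu(v_2))_{(0)}$):
\begin{align*}
[\mathcal L(v_1),\mathcal L(v_2)]&=[\mathcal L(v_1),[Q_{(0)},\mu(v_2)_{(0)}]]=[Q_{(0)},[\mathcal L(v_1),\mu(v_2)_{(0)}]]\\
&=[Q_{(0)},(\mathcal L(v_1)\mu(v_2))_{(0)}]=[Q_{(0)},\mu([v_1,v_2])_{(0)}]=\mathcal L([v_1,v_2]),
\end{align*}
where the second equality uses the Jacobi identity together with $[Q_{(0)},\mathcal L(v_1)]=0$. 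I expect the third paragraph to be the main obstacle: one must pin down the bosonic/fermionic signs in $\mathcal L(v)b^{x'_i}$ and confirm that no higher-order contraction terms survive, so that the classical bracket of $\mathcal Vect(V)$ is reproduced on the nose; the remaining reduction steps are purely formal once $Q_{(0)}^2=0$ is in hand.
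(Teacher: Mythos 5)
Your proposal is correct, and it is more than the paper itself offers: the paper states this lemma with only the phrase ``by a direct calculation,'' so there is no written argument to compare against line by line. Your BRST-style organization is a clean way to carry out (and document) that calculation. The three pillars all check out: (i) $Q(z)Q(w)\sim 0$ holds because $Q$ involves only $\beta$'s and $c$'s, whose mutual OPEs vanish, so $Q_{(0)}^2=\tfrac12(Q_{(0)}Q)_{(0)}=0$ and the graded Jacobi identity gives $[Q_{(0)},\mathcal L(v)]=0$; (ii) the generator computations $\mathcal L(v)\gamma^{x_k}=P_k(\gamma)$ and $\mathcal L(v)b^{x'_i}=-\sum_k:\frac{\partial P_k}{\partial x_i}(\gamma)b^{x'_k}:$ are right, including the sign coming from $\gamma^{x}(z)\beta^{x'}(w)\sim-\langle x,x'\rangle(z-w)^{-1}$; and (iii) the intertwining identity $\mathcal L(v_1)\mu(v_2)=\mu([v_1,v_2])$ reproduces exactly the bracket the paper writes for $\mathcal Vect(V)$. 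The worry you flag at the end --- whether higher-order contractions spoil the Leibniz computation --- resolves affirmatively for a simple reason you could state explicitly: $\mu(v_2)$ is built solely from $\gamma$'s and $b$'s, all of whose mutual OPEs are regular and whose modes supercommute, so the Wick products appearing there behave as ordinary (super)commutative polynomials and the derivation property of the zero mode $\mathcal L(v_1)$ applies without quasi-associativity corrections. What your route buys over a brute-force expansion of $[\mathcal L(v_1),\mathcal L(v_2)]$ via OPEs is that the classical Lie bracket enters at exactly one point (step (iii)), while everything else is formal vertex-algebra bookkeeping ($[A_{(0)},B_{(n)}]=(A_{(0)}B)_{(n)}$, Jacobi, and $Q_{(0)}^2=0$); this also makes transparent why the image of $\mathcal L$ consists of operators commuting with $Q_{(0)}$, a fact the paper uses implicitly later.
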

\begin{proof}For $v,v'\in \mathcal Vect(V)$. Assume $v=\sum_iP_i\frac{\partial}{\partial x_i}$ and $v'=\sum_jP'_j\frac{\partial}{\partial x_j}$.
	\begin{eqnarray*}[\mathcal L(v),\mathcal L(v')]&=&\sum_{i,j}[(Q_{(0)}:P_i(\gamma)b^{x_i'}:)_{(0)},(Q_{(0)}:P_j(\gamma)b^{x_j'}:)_{(0)}]\\
		&=&\sum_{i,j}((Q_{(0)}:P_i(\gamma)b^{x_i'}:)_{(0)}(Q_{(0)}:P'_j(\gamma)b^{x_j'}:))_{(0)}\\	&=&\sum_{i,j}(Q_{(0)}((Q_{(0)}:P_i(\gamma)b^{x_i'}:)_{(0)}:P'_j(\gamma)b^{x_j'}:))_{(0)}\\
		&=&\sum_{i,j}(Q_{(0)}((:P_i(\gamma)\beta^{x_i'}:+\sum_k::\frac{\partial P_i}{\partial x_k}(\gamma)c^{x_k}:b^{x_i'}:)_{(0)}:P'_j(\gamma)b^{x_j'}:))_{(0)}\\
		&=&\sum_{i,j}(Q_{(0)}:P_i(\gamma)\frac{\partial P'_j}{\partial x_i}(\gamma)b^{x_j'}:-Q_{(0)}:\frac{\partial P_i}{\partial x_k}(\gamma)P'_j(\gamma)b^{x_j'}:)_{(0)}\\
		&=&\mathcal L([v,v']).
	\end{eqnarray*}
\end{proof}

Let $\mathcal Z =\mathbb Z_{\geq 0}^{d}$. Let $e_i=(0,\cdots,0,1,0\cdots,0)\in \mathcal Z$ be the element with i-th component $1$ and others $0$. For $s=(s_1,\cdots, s_d)\in \mathcal Z$, let
$$|s|=\sum_{i=1}^d s_i, \quad s!=\prod_{i=1}^d (s_i!).$$
For any symbol '$X$', let
$$X^s=\prod_{i=1}^d (X^i)^{s_i} \,(\text{or}\prod_{i=1}^d (X^{x_i})^{s_i}) ,\quad \frac{\partial}{\partial x^s}=\prod_{i=1}^d(\frac{\partial}{\partial x_i})^{s_i}.$$
Let $\tilde \gamma^{x_i}(z)=\gamma^{x_i}(z)-\gamma^{x_i}_{(-1)}=\sum_{n\neq 0}\gamma^{x_i}_{(n)}z^{-n-1}$.
Let $$\mathcal L^+: \mathcal Vect(V)\to \End_{\mathbb C}(\cW_+(V))$$ be a linear map, such that for $v=P_i(x_1,\cdots, x_d)\frac{\partial}{\partial x_i}\in  \mathcal Vect_n(V) $,
$$\mathcal L^+(v)=\sum_{i, j}(::\frac{\partial {P_i}}{\partial x_j}(\tilde\gamma^{x_1},\cdots, \tilde\gamma^{x_d})c^{x_j}:b^{x'_i}:)_{(0)}
+\sum_i:P_i(\tilde\gamma^{x_1},\cdots, \tilde \gamma^{x_d})\beta^{x'_i}:)_{(0)}.$$
\begin{lemma}\label{lem:eqnL} $$\mathcal L(v)=\sum_i\sum_{s\in \mathcal Z} \frac 1{s!}\gamma_{(-1)}^s \mathcal L^+(\frac{\partial P_i}{\partial x^s}\frac{\partial}{\partial x_i}),$$
	and
	$$\mathcal L^+(v)=\sum_i\sum_{s\in \mathcal Z}(-1)^{|s|} \frac 1{s!}\gamma_{(-1)}^s \mathcal L(\frac{\partial P_i}{\partial x^s}\frac{\partial}{\partial x_i}).$$
\end{lemma}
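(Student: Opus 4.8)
The plan is to first put $\mathcal L(v)$ into a shape that exhibits its parallel with $\mathcal L^+(v)$, and then to read off both displayed identities as the two directions of the multivariable Taylor expansion attached to the splitting $\gamma^{x_k}(z)=\tilde\gamma^{x_k}(z)+\gamma^{x_k}_{(-1)}$. For the first step I would evaluate the $Q_{(0)}$-dressing. From the OPEs one computes $Q_{(0)}\gamma^{x}=c^{x}$, $Q_{(0)}b^{x'}=\beta^{x'}$, and $Q_{(0)}c^{x}=Q_{(0)}\beta^{x'}=0$. Since the zero mode $Q_{(0)}$ of a field is an odd derivation of every normal-ordered product, the Leibniz rule applied to $:P_i(\gamma)b^{x'_i}:$ gives
$$\mathcal L(v)=\big(::\tfrac{\partial P_i}{\partial x_j}(\gamma)\,c^{x_j}:b^{x'_i}:\big)_{(0)}+\big(:P_i(\gamma)\,\beta^{x'_i}:\big)_{(0)}.$$
This is word-for-word the defining expression for $\mathcal L^+(v)$ with the full field $\gamma$ in place of its reduced version $\tilde\gamma=\gamma-\gamma_{(-1)}$; so the whole lemma becomes the assertion that passing between the two substitutions is implemented by the stated Taylor sums.

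Next I would record the commutation of the constant mode and the Taylor identity. The mode $\gamma^{x_k}_{(-1)}$ commutes with $\tilde\gamma$, with $b$, with $c$, and with $\beta^{x'}_{(m)}$ for $m\neq 0$; the only nontrivial bracket is $[\beta^{x'_i}(z),\gamma^{x_k}_{(-1)}]=\bra x_k,x'_i\ket z^{-1}$, whence $[\beta^{x'_i}(z),\gamma^{s}_{(-1)}]=s_i\,\gamma^{s-e_i}_{(-1)}\,z^{-1}$. Because $\tilde\gamma(z)$ and the constant $\gamma_{(-1)}$ commute, the purely algebraic Taylor expansion of the polynomial $P_i$ is a valid operator identity of fields,
$$P_i(\gamma)=\sum_{s\in\mathcal Z}\frac1{s!}\,\gamma^{s}_{(-1)}\,\tfrac{\partial^{|s|}P_i}{\partial x^s}(\tilde\gamma),$$
and likewise for $\tfrac{\partial P_i}{\partial x_j}$. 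Substituting these into the formula from the first step, the task reduces to pulling each factor $\gamma^{s}_{(-1)}$ out of the two $(0)$-modes.

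The extraction of the constant mode is the one delicate point. In the first summand the inner and outer normal orderings are taken against $c^{x_j}$ and $b^{x'_i}$, which commute with $\gamma_{(-1)}$; since the remaining field is built from $\tilde\gamma,b,c$, the factor $\gamma^s_{(-1)}$ commutes with all of its modes and hence with its $(0)$-mode, and pulls straight out. The second summand is where care is needed, because $\gamma_{(-1)}$ does not commute with $\beta$. Writing $F=\tfrac{\partial^{|s|}P_i}{\partial x^s}(\tilde\gamma)$ and splitting $F=F_{+}+F_{-}$ into its creation ($k<0$) and annihilation ($k\ge0$) parts, equation (\ref{eq:circp}) gives
$$:(\gamma^{s}_{(-1)}F)\,\beta^{x'_i}:\;-\;\gamma^{s}_{(-1)}:F\,\beta^{x'_i}:\;=\;[\beta^{x'_i}(z),\gamma^{s}_{(-1)}]\,F_{-}(z)\;=\;s_i\,\gamma^{s-e_i}_{(-1)}\,z^{-1}F_{-}(z).$$
The expected obstacle is exactly this correction term; the point that makes the lemma work is that $z^{-1}F_{-}(z)=\sum_{k\ge0}F_{(k)}z^{-k-2}$ contains no $z^{-1}$, so its $(0)$-mode vanishes and $\gamma^s_{(-1)}$ again pulls cleanly through the $(0)$-mode. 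Collecting the two summands yields the first identity $\mathcal L(v)=\sum_{s}\tfrac1{s!}\gamma^s_{(-1)}\mathcal L^+\!\big(\tfrac{\partial P_i}{\partial x^s}\tfrac{\partial}{\partial x_i}\big)$.

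For the inverse formula I would run the same argument with the roles reversed, expanding $\tilde\gamma=\gamma-\gamma_{(-1)}$ around $\gamma$; the Taylor coefficients then carry the sign $(-1)^{|s|}$, and the correction-term computation above is unchanged (it never used that $F$ commutes with $\gamma_{(-1)}$), so the same vanishing of the $(0)$-mode produces $\mathcal L^+(v)=\sum_{s}(-1)^{|s|}\tfrac1{s!}\gamma^s_{(-1)}\mathcal L\!\big(\tfrac{\partial P_i}{\partial x^s}\tfrac{\partial}{\partial x_i}\big)$. Alternatively, the second identity is the formal inverse of the first, since the transition array indexed by $\gamma_{(-1)}$-degree is unipotent and upper triangular; I would use this as a consistency check on the signs.
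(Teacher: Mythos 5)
Your proof is correct and follows essentially the same route as the paper: first evaluate the $Q_{(0)}$-dressing by the Leibniz rule to write $\mathcal L(v)$ in the same shape as $\mathcal L^+(v)$, then Taylor expand in the splitting $\gamma=\tilde\gamma+\gamma_{(-1)}$ and pull the factors $\gamma^s_{(-1)}$ out of the $(0)$-modes. The only difference is that you explicitly justify the extraction step via the correction term $s_i\,\gamma^{s-e_i}_{(-1)}z^{-1}F_{-}(z)$, whose $(0)$-mode vanishes, a point the paper's proof leaves implicit; this is a welcome refinement rather than a different argument.
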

\begin{proof}
	We only need to show the first equation.
	\begin{eqnarray*}
		\mathcal L(v)&=&\sum_{i, j}::\frac{\partial P_i}{\partial x_j}(\gamma^{x_1},\cdots, \gamma^{x_d}) c^{x_j}:b^{x'_i}:_{(0)}+\sum_i:P_i(\gamma^{x_1},\cdots, \gamma^{x_d})\beta^{x'_i}:_{(0)}\\
		&=&\sum_{i,j}::\frac{\partial P_i}{\partial x_j}(\tilde \gamma^{x_1}+ \gamma^{x_1}_{(-1)},\cdots, \tilde\gamma^{x_d}+
		\gamma^{x_d}_{(-1)}) c^{x_j}:b^{x'_i}:_{(0)}\\
		& &\hspace{3cm}+\sum_i:P_i(\tilde \gamma^{x_1}+ \gamma^{x_1}_{(-1)},\cdots, \tilde \gamma^{x_d}+ \gamma^{x_1}_{(-1)})\beta^{x'_i}:_{(0)}\\
		&=&\sum_{i,j}\sum_{s\in\mathcal Z} \frac 1{s!}\gamma_{(-1)}^s(::\frac{\partial}{\partial x_j}(\frac{\partial P_i}{\partial x^s})(\tilde \gamma^{x_1},\cdots, \tilde\gamma^{x_d}) c^{x_j}:b^{x'_i}:_{(0)}\\
		& &\hspace{3cm} +\sum_i:(\frac{\partial P_i}{\partial x^s})(\tilde \gamma^{x_1}
		,\cdots, \tilde \gamma^{x_d})
		\beta^{x'_i}:_{(0)})\\
		&=&\sum_{i}\sum_{s\in\mathcal Z} \frac 1{s!}\gamma_{(-1)}^s\mathcal L^+\sum_i(\frac{\partial P_i}{\partial x^s}\frac{\partial}{\partial x_i}).
	\end{eqnarray*}
	
\end{proof}
\begin{corollary}For $v\in Vect_0(V)$, $a\in \bW(V)$, $\mathcal L^+(v)a=\mathcal L(v)a$.
\end{corollary}
\begin{proof} For $v\in Vect_0(V)$, $v=\sum_{i, j} c_{ij}x_i\frac{\partial}{\partial x_j}$. By Lemma \ref{lem:eqnL},
	$$\mathcal L(v)a=\mathcal L^+(v)a+\sum_{i, j} \gamma^{x_i}_{(-1)}\mathcal L^+(c_{ij}\frac{\partial}{\partial x_j}) a=\mathcal L^+(v)a+\sum_{i, j} \gamma^{x_i}_{(-1)}c_{ij}\beta^{x_i'}_{(0)} a=\mathcal L^+(v)a.$$
\end{proof}
\subsection{$\mathcal Vect(V, \omega_i)$ invariants}
For subsets $T \subset \mathcal Vect(V)$ and $R\subset  \cW(V)$, let 
$$R^{T}=\{a\in R| \mathcal L(g) a=0, \text{for any }g\in T\}.$$
For an operator $O$ on $\cW(V)$, 
$$R^{O}=\{a\in R| O(a)=0\}.$$
\begin{lemma}\label{lem:Linvariant}
	Elements in $\cA_0(V)$ are $\mathcal Vect(V, \omega_0)$ invariant and Elements in $\cA_1(V)$ are $\mathcal Vect(V, \omega_1)$ invariant.
\end{lemma}
\begin{proof}
	It is easy to check that for any $v\in \mathcal Vect(V, \omega_0)$,  $Q, L, J, G, B, C, D$ and $E$ are $\mathcal L(v)$ invariant and for any $v'\in \mathcal Vect(V, \omega_1)$, $Q, L, J, G, B', C', D'$ and $E'$ are $\mathcal L(v')$ invariant.	So elements in $\cA_0(V)$ are $\mathcal Vect(V, \omega_0)$ invariant and elements in $\cA_1(V)$ are $\mathcal Vect(V, \omega_1)$ invariant.
\end{proof}
We can give the following conjecture,
\begin{conjecture} $\cW(V)^{\mathcal Vect(V, \omega_0)}=\cA_0(V);$ $\cW(V)^{\mathcal Vect(V, \omega_1)}=\cA_1(V).$
\end{conjecture}
We will see that this conjecture is true when $\dim V=2$ in Corollary \ref{cor:equal}.

We will need the following lemmas in Section 6. 
\begin{lemma}\label{lem:wequal}
	For any nonzero $g\in \mathcal Vect_1(V, \omega_i)$,
	$$\cW(V)^{\mathcal Vect(V, \omega_i)}=( \cW_+(V)^{\mathcal Vect_0(V, \omega_i)})^{\mathcal L(g)}.$$
\end{lemma}
\begin{proof}
	$\mathcal Vect_{-1}(V, \omega_i)=\mathcal Vect_{-1}(V)$ is generated by $\frac{\partial}{\partial x_j}$, $1\leq j\leq d$.
	So 
	$$\cW(V)^{\mathcal Vect(V, \omega_i)}=(\cW(V)^{\mathcal Vect_{-1}(V, \omega_i)})^{\mathcal Vect_{\geq}(V, \omega_i)}=\cW_+(V)^{\mathcal Vect_\geq(V, \omega_i)}.$$
	By Lemma \ref{lem:propertyTA} and Lemma \ref{lem:propertyTC}, $\mathcal Vect_\geq(V, \omega_i)$ is generated by $\mathcal Vect_0(V, \omega_i)$ and $g$.
\end{proof}

\begin{lemma} \label{lem:wequalplus}For any nonzero $g\in \mathcal Vect_1(V, \omega_i)$,
	$$\cW(V)^{\mathcal Vect(V, \omega_i)}=(\cW_+(V)^{\mathcal Vect_0(V, \omega_i)})^{\mathcal L^+(g)}.$$
\end{lemma}
\begin{proof}
	Assume $g=P_i\frac{\partial}{\partial x_i}$, $P_i$ are quadratic polynomials.
	$$\mathcal L(g)=\mathcal L^+(g)+\sum_i \gamma_{(-1)}^{x_j}L^+(\frac{\partial P_i}{\partial x_j}\frac{\partial}{\partial x_i})+\sum_iP_i(\gamma^{x_1}_{(-1)},\cdots,\gamma^{x_1}_{(-1)})\beta^{x_i'}_{(0)}.$$
	$\sum_i\frac{\partial P_i}{\partial x_j}\frac{\partial}{\partial x_i}\in \mathcal Vect_0(V, \omega_i)$. So for any $a\in \cW_+(V)^{\mathcal Vect_0(V, \omega_i)}$,
	$\mathcal L(g)a=\mathcal L^+(g)a.$
	We have $$\cW(V)^{\mathcal Vect(V, \omega_i)}=(\cW_+(V)^{\mathcal Vect_0(V, \omega_i)})^{\mathcal L(g)}=(\cW_+(V)^{\mathcal Vect_0(V, \omega_i)})^{\mathcal L^+(g)}.$$
\end{proof}

If $V'$ is a vector space,
$\psi:V\to V'$ is a linear isomorphism, $\omega_i=\psi^*(\omega'_i)$ is the pullback of a form $\omega'_i$ on $V'$. Obviously $\psi$ will induce an isomorphism of Lie algebras
$\mathcal Vect(V,\omega_i)\cong \mathcal Vect(V', \omega'_i).$
So $\cW(\psi)$ will give an isomorphism of vertex algebra,
\begin{lemma}\label{lem:isopsi}
	$$\cW(V)^{\mathcal Vect(V, \omega_i)}\cong \cW(V')^{\mathcal Vect(V', \omega'_i)}.$$
\end{lemma}
\section{Chiral de Rham algebra}\label{sec:three}

The chiral de Rham complex introduced in~\cite{MSV}\cite{MS} is a sheaf of vertex algebras $\Omega_X^{ch}$ defined on any manifold $X$ in either the algebraic, complex analytic, or $C^{\infty}$ categories. In this paper we work in the complex analytic and $C^{\infty}$ settings.
Let $\mathcal W=\mathcal W(\mathbb C^d)$ and $x_1',\cdots,x_d'$ be a standard basis of $\mathbb C^d$. Let $\beta^i=\beta^{x'_i}$, $b^i=b^{x'_i}$, $\gamma^i=\gamma^{x_i}$ and $c^i=c^{x_i}$.
%Let $\Omega_d$ be the tensor product of $d$ copies of the $\beta\gamma-bc$ system. It has $2d$ even generators $\beta^1(z),\cdots ,\beta^d(z),\gamma^1(z),\cdots,\gamma^d(z)$ and $2d$ odd generators $b^1(z),\cdots ,b^d(z), c^1(z),\cdots,c^d(z)$.
%Their nontrivial OPEs are $$\beta^i(z) \gamma^j(w)\sim \frac {\delta^i_j}{z-w},\quad b^i(z) c^j(w)\sim \frac {\delta^i_j}{z-w} .$$
%The following four fields in $\Omega_d$
%$$L(z)=\sum (:\beta^i(z)\partial\gamma^i(z):-:b^i(z)\partial c^i(z):), \quad
%Q(z)= \sum:\beta^i(z)c^i(z):,$$
%$$J(z)=-\sum:b^i(z)c^i(z):, \quad
%G(z)=\sum:b^i(z)\partial\gamma^i(z):,$$
%give $\Omega_d$ the structure of a topological vertex algebra \cite{LZ}.

If $X$ is a complex manifold and $(U,\gamma^1,\cdots \gamma^d)$ is a complex coordinate system of $X$,
$\mathcal O(U)$   is a  $\mathbb C[\gamma^1_{(-1)},\cdots \gamma^d_{(-1)}]$ module
by identifying the action of $\gamma^i_{(-1)}$ with  the product of $\gamma^i$.  $\cW$ is a free $\mathbb C[\gamma^1_{(-1)},\cdots \gamma^d_{(-1)}]$ module. $\Omega_X^{ch}(U)$ is the localization of $\cW$ on
$U$, $$\Omega_X^{ch}(U)=\cW \otimes_{\mathbb C[\gamma^1_{(-1)},\cdots \gamma^d_{(-1)}]}\mathcal O(U).$$
Then $\Omega_X^{ch}(U)$ is the vertex algebra generated by $\beta^i(z), b^i(z), c^i(z)$ and $f(z)$, $f\in \mathcal O(U)$. The nontrivial OPEs among these generators are
$$\beta^i(z)  f(w)\sim \frac {\frac{\partial f}{\partial \gamma^i}(z)}{z-w},\quad b^i(z) c^j(w)\sim \frac {\delta^i_j}{z-w},$$ as well as the normally ordered product relations $$:f(z)g(z):\ =fg(z), \text{ for }  f, g \in \mathcal O(U).$$
Through $\tilde \pi$, we have a linear isomorphism
$$SW \otimes_{\mathbb C}\mathcal O(U)\to \Omega_X^{ch}(U),\quad a\otimes f \mapsto af.$$
Here $SW=SW(\mathbb C^d)$.
% So $\Omega_X^{ch}(U)$ is spanned by the elements
%\begin{eqnarray}\label{eq.span}
%:\partial^{k_1}\beta^{i_1}\cdots \partial^{k_s}\beta^{i_s}\partial^{l_1}b^{j_1}
%\cdots \partial^{l_t}b^{j_t}\partial^{m_1}c^{r_1}
% \cdots\partial^{m_u}c^{r_u}\partial^{n_1}\gamma^{s_1}\cdots \partial^{n_v}\gamma^{s_v}c^{w_1}\cdots c^{w_{u'}}f(\gamma): , \\
%f(\gamma)\in \mathcal O(U),\quad k_1\geq k_2\geq \cdots\geq k_s,\, l_1\geq\cdots \geq l_t,\, m_1\geq\cdots \geq m_u>0,\, n_1\geq\cdots \geq n_v>0.\nonumber
% \end{eqnarray}
$\Omega_X^{ch}(U)$  is a free $\cO(U)$ module.

Let $\tilde \gamma^1,\cdots \tilde \gamma^d$ be another set of coordinates on $U$, with
$$\tilde \gamma^i=f^i(\gamma^1,\cdots \gamma^d), \quad \gamma^i=g^i(\tilde \gamma^1,\cdots \tilde \gamma^d).$$
We have the following coordinate change equations:
\begin{align}\label{chi.coo}
\partial \tilde \gamma^i(z)&=\sum_{j=1}^d:\frac{\partial f^i}{\partial \gamma^j}(z)\partial \gamma^j(z):\,, \nonumber \\
\tilde b^i(z)&=\sum_{j=1}^d\frac{\partial g^j}{\partial \tilde \gamma^i}(f(\gamma))(z)b^j(z): \nonumber\,, \\
\tilde c^i(z)&=\sum_{j=1}^d:\frac{\partial f^i}{\partial \gamma^j}(z)c^j(z):\,,\\
\tilde \beta^i(z)&=\sum_{j=1}^d(:\frac{\partial g^j}{\partial \tilde \gamma^i}(f(\gamma))(z)\beta^j(z):
+\sum_{k=1}^d::\frac{\partial}{\partial \gamma^k}(\frac{\partial g^j}{\partial \tilde \gamma^i}(f(\gamma)))(z)c^k(z):b^j(z):)\,.\nonumber
\end{align}

\subsection{Global sections}
There are four sections $Q(z), L(z), J(z)$ and $G(z)$ from Equation (\ref{eqn:globlesection0}) in $\Omega_X^{ch}(U)$.
%\begin{align}
%&Q(z)= \sum_{i=1}^d:\beta^i(z)c^i(z):,& %&L(z)=\sum_{i=1}^d(:\beta^i(z)\partial\gamma^i(z):-:b^i(z)\partial %c^i(z):),&\nonumber\\
%&J(z)=-\sum_{i=1}^d:b^i(z)c^i(z):,& %&G(z)=\sum_{i=1}^d:b^i(z)\partial\gamma^i(z):.&\nonumber
%\end{align}
For a general complex manifold $X$,
$L(z)$ and $G(z)$  are globally defined. $Q(z)$ and $J(z)$ are not globally defined, but $Q_{(0)}$ and $J_{(0)}$, the zero modes of $Q(z)$ and $J(z)$, are. The operators $L_{(1)}$ and $J_{(0)}$ give $\Omega_X^{ch}$ a $\mathbb Z_{\geq 0}\times\mathbb Z $-grading by conformal weights $k$ and fermion numbers $l$, respectively.
$$\Omega_X^{ch}=\bigoplus_{k,l} \Omega_X^{ch}[k,l].$$

If $X$ is a Calabi-Yau manifold with a nowhere vanishing holomorphic $d$ form $w_0$ on $X$, let $(U,\gamma_1,\cdots,\gamma_d)$ be a coordinate system of $X$ such that locally,
$$w_0|_U=d\gamma^1\cdots d\gamma^d.$$
The eight sections $Q(z), L(z), J(z), G(z),B(z),C(z),D(z)$ and $E(z)$
from Equation (\ref{eqn:globlesection0},\ref{eqn:globlesection1}) in  $\Omega_X^{ch}(U)$ are globally defined on $X$.

If $X$ is a hyperK\"ahler manifold with the symplectic holomorphic form $w_1$, let $(U,\gamma_1,\cdots,\gamma_d)$ be a coordinate system of $X$ such that
locally, $$w_1|_U=\sum_{i=1}^{\frac d 2} d\gamma^{2i-1}\wedge d\gamma^{2i},$$ then
the eight sections $Q(z), L(z), J(z), G(z),B'(z),C'(z),D'(z)$ and $E'(z)$
from Equation (\ref{eqn:globlesection0},\ref{eqn:globlesection2}) in  $\Omega_X^{ch}(U)$ are globally defined on $X$.
% For a given complex manifold $X$, let
%$\mathcal U$ be the set of the open sets of $M$ which are isomorphic to open sets of $\mathbb C^n$. The correspondence $U\to \cQ(U)$  for $U\in \mathcal U$, defines a vertex algebra sheaf $\cQ = \cQ_M$ called the \textsl{chiral de Rham complex} on $M$.

%When $M$ is Calabi-Yau, i.e. $c_1(TM)=0$, the four fields $L(z),J(z), Q(z), G(z)$
% are globally defined on $M$, giving $\cQ(M)$ the structure of  a topological vertex algebra.

%\section{Associated graded sheaves of the chiral de Rham complex}\label{sec.ag}

\subsection{Cohomology of chiral de Rham complex}
Regarding $X$ as a smooth real manifold, in the above construction of $\Omega^{ch}_X$, replacing the complex coordinate system and the space of analytic functions $\cO(U)$ by the smooth coordinate system and  the space of smooth complex functions $C^\infty(U)$, we get a chiral de Rham complex in the smooth setting, which we denote $\Omega_X^{ch,sm}$.  $\Omega_X^{ch,sm}$ contains $\Omega_X^{ch}$ and its complex conjugate. So $L(z)$, $G(z)$, $Q_{(0)}$, $J_{(0)}$ and their complex conjugates  $\bar L(z)$, $\bar G(z)$, $\bar Q_{(0)}$, $\bar J_{(0)}$ are globally defined. $L(z)$, $G(z)$, $Q_{(0)}$ and $J_{(0)}$ commute  with  $\bar L(z)$, $\bar G(z)$, $\bar Q_{(0)}$ and $\bar J_{(0)}$.

$\Omega_X^{ch,sm}$ is $ \mathbb Z_{\geq 0}\times \mathbb Z$ graded
$$\Omega_X^{ch,sm}=\bigoplus_{k\geq 0}\bigoplus_l \Omega_X^{ch,sm}[k,l] $$
with $ \bar L_{(1)} a=k a,\, \bar J_{(0)} a=l a$ for $ a\in \Omega_X^{ch,sm}[k,l](U) $.

Since $\bar Q_{(0)}\bar Q_{(0)}=0$ and $\bar Q_{(0)}$ maps $\Omega_X^{ch,sm}[k,l]$ to $\Omega_X^{ch,sm}[k,l+1]$,
$(\Omega_X^{ch,sm}, \bar Q_{(0)})$ is a complex of sheaves. Let $\Omega_X^{ch,l}= \Omega_X^{ch,sm}[0,l]$ and $\Omega_X^{ch,*}=\oplus_{l=0}^d \Omega_X^{ch,sm}[0,l]$. Let $\bpartial=\bar Q_{(0)}|_{\Omega_X^{ch,sm}}$.
Then $(\Omega_X^{ch,*},\bpartial)$ is a sub complex of  $(\Omega_X^{ch,sm},\bar Q_{(0)})$.  By
$$[\bar Q_{(0)}, \bar G_{(1)}]=\bar L_{(1)},$$
we have

\begin{lemma}\label{lem:quasiiso}For any complex manifold, the obvious imbedding
	$$(\Omega_X^{ch,*},\bpartial)\hookrightarrow (\Omega_X^{ch,sm}, \bar Q_{(0)})$$ is a quasiisomorphism.
\end{lemma}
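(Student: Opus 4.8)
The plan is to exploit the conformal-weight grading together with the given relation $[\bar Q_{(0)},\bar G_{(1)}]=\bar L_{(1)}$ to produce a contracting homotopy on every strictly positive weight piece, so that only the weight-zero part contributes to cohomology. First I would observe that $\bar Q(z)$ has conformal weight one, so its zero mode $\bar Q_{(0)}$ shifts the $\bar L_{(1)}$-weight by zero; hence $\bar Q_{(0)}$ preserves each weight summand and the complex of sheaves $(\Omega_X^{ch,sm},\bar Q_{(0)})$ splits as a direct sum $\bigoplus_{k\geq 0}(\Omega_X^{ch,sm}[k,*],\bar Q_{(0)})$ of subcomplexes, the inner grading being the fermionic number $l$. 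By definition $\Omega_X^{ch,*}$ is exactly the $k=0$ summand, so the imbedding in question is the inclusion of this summand.

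It then remains to show that for every $k>0$ the summand $(\Omega_X^{ch,sm}[k,*],\bar Q_{(0)})$ is acyclic, and here I would use $\bar G_{(1)}$ as the homotopy. Since $\bar G(z)$ has conformal weight two, the mode $\bar G_{(1)}$ preserves the $\bar L_{(1)}$-weight and lowers the fermionic number by one, so it restricts to a degree $-1$ endomorphism of each $\Omega_X^{ch,sm}[k,*]$. On $\Omega_X^{ch,sm}[k,*]$ the operator $\bar L_{(1)}$ acts as multiplication by $k$, so the relation $[\bar Q_{(0)},\bar G_{(1)}]=\bar L_{(1)}$ reads $\bar Q_{(0)}\bar G_{(1)}+\bar G_{(1)}\bar Q_{(0)}=k\,\id$. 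For $k>0$ the operator $\tfrac1k\bar G_{(1)}$ is therefore a contracting homotopy, so $(\Omega_X^{ch,sm}[k,*],\bar Q_{(0)})$ has vanishing cohomology sheaves.

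Combining the two observations, the inclusion of the $k=0$ summand into $\bigoplus_{k\geq 0}(\Omega_X^{ch,sm}[k,*],\bar Q_{(0)})$ induces an isomorphism on cohomology sheaves, which is precisely the asserted quasi-isomorphism. I expect the argument to be essentially formal once the relation $[\bar Q_{(0)},\bar G_{(1)}]=\bar L_{(1)}$ is in hand; the only point requiring care is the weight bookkeeping, namely verifying that both $\bar Q_{(0)}$ and $\bar G_{(1)}$ preserve the conformal-weight grading so that the homotopy genuinely lives inside each fixed-$k$ subcomplex. Since all operators involved are globally defined morphisms of sheaves, the homotopy is itself a sheaf map and the conclusion holds at the level of complexes of sheaves, not merely on sections.
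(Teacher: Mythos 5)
Your proof is correct and follows exactly the route the paper intends: the paper's entire argument is the one-line invocation of $[\bar Q_{(0)},\bar G_{(1)}]=\bar L_{(1)}$, and your write-up simply makes explicit the standard consequence, namely that $\tfrac1k\bar G_{(1)}$ is a contracting homotopy on each positive-weight summand $\Omega_X^{ch,sm}[k,*]$, leaving only the weight-zero subcomplex $(\Omega_X^{ch,*},\bpartial)$. The weight bookkeeping and the remark that the homotopy is a globally defined sheaf map are exactly the details the paper leaves implicit.
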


Let $H^*(X,\Omega_X^{ch})$ be the cohomolgoy of the sheaf $\Omega_X^{ch}$, which is called \textsl{chiral Hodge cohomology} of $X$. We have the following theorem.
\begin{thm} \label{thm: cohoderham}The Chiral Hodge cohomology of $X$ is the cohomology of complex
	$$\Omega_X^{ch,0}(X)\xrightarrow{\bpartial} \Omega_X^{ch,1}(X)\xrightarrow{\bpartial}\cdots \xrightarrow{\bpartial}\Omega_X^{ch,d}(X).$$
	It  is also the cohomology of complex $(\Omega_X^{ch,sm}(X), \bar Q_{(0)})$.
\end{thm}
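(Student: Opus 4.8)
The plan is to deduce the first assertion from the abstract de Rham theorem and then obtain the second by a conformal-weight argument carried out directly on global sections. For the first assertion I would use the fact, recorded just above, that $(\Omega_X^{ch,*},\bpartial)$ is a soft resolution of $\Omega_X^{ch}$. Since $X$ is paracompact, a resolution by soft sheaves computes sheaf cohomology, so $H^k(X,\Omega_X^{ch})$ is canonically isomorphic to the $k$-th cohomology of the complex of global sections $(\Gamma(X,\Omega_X^{ch,*}),\bpartial)$. As $\Omega_X^{ch,*}=\bigoplus_{l=0}^N\Omega_X^{ch,sm}[0,l]$ and $\bpartial$ raises the fermionic degree $l$ by one, this complex of global sections is precisely
$$\Omega_X^{ch,0}(X)\xrightarrow{\bpartial}\Omega_X^{ch,1}(X)\xrightarrow{\bpartial}\cdots\xrightarrow{\bpartial}\Omega_X^{ch,N}(X),$$
which proves the first statement.

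For the second statement I would argue directly on global sections, carrying out the global analogue of the sheaf-level quasi-isomorphism $(\Omega_X^{ch,*},\bpartial)\hookrightarrow(\Omega_X^{ch,sm},\bar Q_{(0)})$ established above. The operators $\bar L_{(1)}$, $\bar G_{(1)}$ and $\bar Q_{(0)}$ are globally defined on $\Omega_X^{ch,sm}(X)$ and satisfy $[\bar Q_{(0)},\bar G_{(1)}]=\bar L_{(1)}$, while $\bar Q_{(0)}$ commutes with $\bar L_{(1)}$. Hence the complex $(\Omega_X^{ch,sm}(X),\bar Q_{(0)})$ splits as the direct sum over the conformal weights $k\geq 0$ of its $\bar L_{(1)}$-eigenspaces, the $k=0$ summand being exactly $(\Omega_X^{ch,*}(X),\bpartial)$. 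On each summand with $k\geq 1$ the commutation relation shows that $\tfrac1k\bar G_{(1)}$ is a contracting homotopy, so that summand is acyclic; therefore the cohomology of the full complex agrees with that of its weight-zero part, and the second assertion follows from the first. Alternatively, one could invoke the sheaf-level quasi-isomorphism together with the fact that a quasi-isomorphism between complexes of soft sheaves induces an isomorphism on the cohomology of global sections.

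Since the statement is largely a formal consequence of the soft-resolution property and the preceding lemma, I do not expect a serious obstacle; the one point I would check with care is the compatibility of global sections with the conformal-weight grading. Concretely, I must verify that every global section of $\Omega_X^{ch,sm}$ is a finite sum of $\bar L_{(1)}$-homogeneous global sections, so that the contracting-homotopy argument may be applied one weight at a time and no completion issue intervenes; this holds because $\bar L_{(1)}$ is globally defined with nonnegative integer eigenvalues and each local section is already a finite sum of weight-homogeneous pieces. The softness needed in the first step is immediate, as $\Omega_X^{ch,*}$ and $\Omega_X^{ch,sm}$ are sheaves of $C^\infty_X$-modules and hence fine.
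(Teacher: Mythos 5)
Your argument follows the same route as the paper: the first assertion is the soft-resolution argument, and the second rests on the super-commutator identity $[\bar Q_{(0)},\bar G_{(1)}]=\bar L_{(1)}$, which is precisely the ingredient the paper's preceding lemma uses (there applied at the level of sheaves rather than of global sections).

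The one step that does not hold in the stated generality is your final verification: on a non-compact complex manifold (and the theorem, like the lemma before it, is stated for an arbitrary complex manifold) it is not true that every global section of $\Omega_X^{ch,sm}$ is a finite sum of $\bar L_{(1)}$-homogeneous global sections. A global section of a direct sum of sheaves need only be \emph{locally} a finite sum, and the number of weights occurring can grow along an exhaustion of $X$; thus $\Gamma(X,\bigoplus_k\mathcal F_k)$ is in general strictly larger than $\bigoplus_k\Gamma(X,\mathcal F_k)$, and your inference from ``each local section is a finite sum of homogeneous pieces'' breaks down exactly at this point, so the weight-by-weight contracting homotopy cannot be applied directly to an arbitrary global section. (For compact $X$ the claim is true and your argument is complete.) The repair is to note that no decomposition of global sections is needed: the weight-zero projection $P_0$ and the operator $h$ defined as $\frac1k\bar G_{(1)}$ on conformal weight $k\geq 1$ and as $0$ on weight zero are morphisms of sheaves, and the identity $\bar Q_{(0)}h+h\bar Q_{(0)}=\mathrm{id}-P_0$ holds as an identity of sheaf maps; applying the global-sections functor then gives a deformation retraction of $(\Omega_X^{ch,sm}(X),\bar Q_{(0)})$ onto $(\Omega_X^{ch,*}(X),\bpartial)$ with no compactness hypothesis. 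This observation also patches your alternative argument, since the complex $(\Omega_X^{ch,sm},\bar Q_{(0)})$ is not bounded below in the fermionic grading (the charge can be arbitrarily negative as the conformal weight grows), so the standard fact that a quasi-isomorphism between bounded-below complexes of soft sheaves induces an isomorphism on cohomology of global sections does not apply verbatim; the explicit sheaf-level homotopy avoids the issue entirely.
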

\begin{proof}
	Let $\overline{SW}$ be the complex conjugate of $SW$.
	Locally, we have a linear isomorphism
	$$SW\otimes\overline{SW}\otimes C^{\infty}(U)\to \Omega_X^{ch,sm}(U), \quad a\otimes b\otimes f\to abf.$$
	This give the isomorphism
	$$SW\otimes \Omega_X^{0,*}(U)\to \Omega_X^{ch,*}(U), \quad a\otimes f\to af,$$
	by identifying the multiplying of $d\bar \gamma^i$ with $\bar c^i_{(-1)}$. Here $\Omega_X^{0,*}(U)=\oplus_k\Omega_X^{0,k}(U)$ and $\Omega_X^{0,k}(U)$ is the space of $(0,k)$ differential forms on $U$.
	It is easy to see that $(\Omega_X^{ch,*},\bpartial)$ is a soft resolution of $\Omega_X^{ch}$.
	So $(\Omega_X^{ch,*}(X),\bpartial)$ calculate the cohomolgy of the the sheaf $\Omega_X^{ch}$. The second statement is from Lemma \ref{lem:quasiiso}.
\end{proof}

\begin{corollary} $H^*(X,\Omega_X^{ch})$  is  a vertex algebra.
\end{corollary}
\begin{proof}
	$\Omega_X^{ch,sm}$ is a sheaf of vertex algebra and
	$$ \bar Q_{(0)}(a_{(n)}b)=(\bar Q_{(0)}a)_{(n)}b)+(-1)^{|a|}a_{(n)}(\bar Q_{(0)}b), \text{ for } a, b \in \Omega_X^{ch,sm}(X).$$
	So the cohomology of the complex $(\Omega_X^{ch,sm}(X), \bar Q_{(0)})$ is a vertex algebra. By Theorem \ref{thm: cohoderham}, $H^*(X,\Omega_X^{ch})$  is  a vertex algebra.
\end{proof}
$H^0(X,\Omega_X^{ch})=\Omega^{ch}_X(X)$, the space of holomorphic sections of $\Omega^{ch}_X $, is a sub vertex algebra of $H^*(X,\Omega_X^{ch})$.

\section{Harmonic theory for chiral de Rham complex}\label{sec:four}
Let $\bar T^*X$ be the antiholomorphic cotangent bundle of $X$ and $\bar T^*_xX$ be its fibre on $x\in X$.
In this section, we assume that $X$ is a compact K\"ahler manifold with the K\"ahler form $h$.
\subsection{A bundle of vertex algebra}
In this subsection, we construct an antiholomorphic vector bundle, which is isomorphic to the antiholomorphic vector bundle (\ref{eqn:isoE}). And the sheaf of antiholomorphic sections of this bundle is a subsheaf of sheaf of $\Omega^{0,*}_X$

Let $(U, \gamma^1,\cdots,\gamma^d)$ be a holomorphic coordinate system of $X$. Locally on $U$, assume the  K\"ahler form  of $X$  is $h|_U=\sum_{i, j}H_{ij}d\gamma^i\wedge d\overline{\gamma}^j$.  Let $(H^{jk})$ be the inverse matrix of $(H_{ij})$, that is $\sum_jH_{ij}H^{jk}=\delta_{ik}$.
Let $$\mathbf {\Gamma}^j=\sum_i:H_{ij}\partial\gamma^i:,\quad \mathbf c^j=\sum_i:H_{ij}c^i:,\quad  \mathbf b^j=\sum_i:H^{ji}b^i:, \quad  \mathbf B^j=Q_{(0)} \mathbf b^j. $$
The nontrivial OPEs among these elements are
$$ \mathbf B^i(z)\mathbf \Gamma^{j}(w)\sim \delta_{ij}{(z-w)}^{-2},\quad
\mathbf b^i(z)\mathbf c^{j}(w)\sim \delta_{ij}{(z-w)}^{-1}.$$
Let $\mathcal W_+^\gamma$ be the vertex algebra generated by $\mathbf B^i$, $\mathbf \Gamma^i$, $\mathbf b^i$ and $\mathbf c^i$. It is isomorphic to $\mathcal W_+(V)$ as a vertex algebra by mapping $\mathbf B^i$, $\mathbf \Gamma^i$, $\mathbf b^i$ and $\mathbf c^i$ to $\beta^i$, $\alpha^i$, $b^i$ and $c^i$, respectively.
$\mathbf b^i,\mathbf c^i$ and  $\mathbf \Gamma^i$  commute with smooth functions on $U$, and
$$\mathbf B^j(z) f(w)\sim \frac{\sum_iH^{ji}\frac {\partial f}{\partial \gamma^i}}{z-w},\quad \text{ for any smooth function on } U.$$
In particular, $\mathbf b^i,\mathbf c^i$,  $\mathbf \Gamma^i$ and $\mathbf B^i$ commute with antiholomorphic functions on $U$.

Let $S_0^{\gamma}$ be the set of $\mathbf b^i_{(n)},\mathbf c^i_{(n)}, \mathbf \Gamma^i_{(n)}, \mathbf B^i_{(n)}$, $1\leq i\leq d$, ${n<0}$. Let $SW^\gamma=\mathbb C[S_0^{\gamma}]$ be the polynomial ring generated by negative modes of $\mathbf B,\mathbf\Gamma,\mathbf b$ and $\mathbf c$.
Let $S^{\gamma}$ be the set of monomials of $\mathbf b^i_{(n)},\mathbf c^i_{(n)}, \mathbf \Gamma^i_{(n)}, \mathbf B^i_{(n)}$ with $n<0$. It is a set of basis of
$SW^{\gamma}$.
Now if $(\tilde U, \tilde \gamma^1,\cdots \tilde \gamma^d)$ is another holomorphic  coordinate system of $X$ with $\gamma^i=g_i(\tilde \gamma)$ and $\tilde \gamma^i=f_i(\gamma)$. Assume $h|_{\tilde U}=\sum_{i, j}\tilde H_{ij}d\tilde \gamma^i\wedge d\overline{\tilde \gamma}^j$ and $(\tilde H^{jk})$ is the inverse matrix of $(\tilde H_{ij})$.  
Let $${\tilde{\mathbf\Gamma}}^j=\sum_i:H_{ij}\partial\tilde \gamma^i:,\quad {\tilde{\mathbf c}}^j=\sum_i:\tilde H_{ij}\tilde c^i:,\quad  {\tilde {\mathbf b}}^j=\sum_i:\tilde H^{ji}\tilde b^i:, \quad  {\tilde {\mathbf B}}^j=Q_{(0)} {\tilde {\mathbf b}}^j. $$
Then on $U\cap \tilde U$,
\begin{eqnarray}\label{eqn:chang0}\tilde {\mathbf {\Gamma}}^j&=&\sum_i:\tilde H_{ij}\partial\tilde\gamma^i:
=\sum_{i, k, l}:H_{kl}\frac{\partial g_k}{\partial {\tilde\gamma}^i}\overline{\frac{\partial g_l}{\partial \tilde \gamma^j}}\partial\tilde\gamma^i:\nonumber\\
&=&\sum_{k, l}:H_{kl}\partial \gamma^k\overline{\frac{\partial g_l}{\partial \tilde \gamma^j}}:
=\sum_l :\overline{\frac{\partial g_l}{\partial \tilde \gamma^j}}{\mathbf {\Gamma}}^l:
\end{eqnarray}
Similarly,
\begin{equation}\label{eqn:chang1}\tilde{\mathbf c}^j=\sum_l :\overline{\frac{\partial g_l}{\partial \tilde \gamma^j}}\mathbf c^l:,
\quad \tilde{\mathbf b}^j=\sum_l :\overline{\frac{\partial f_j}{\partial \gamma^l}}\mathbf b^l:,
\quad  \tilde{\mathbf B}^j=\sum_l :\overline{\frac{\partial f_j}{\partial \gamma^l}}\mathbf B^l:.
\end{equation}
Since $\overline{\frac{\partial g_l}{\partial \tilde \gamma^j}}$ and $\overline{\frac{\partial f_j}{\partial \gamma^l}} $ are antiholomorphic functions, they commute with $\mathbf b^i$,$\mathbf c^i$, $\mathbf \Gamma^i$, $\mathbf B^i$. So under the change of coordinate, $\mathbf B^i$ and $\mathbf b^i$ change like $d \bar \gamma^i$, and $\mathbf \Gamma^i$ and $\mathbf c^i$ change like $\frac{\partial}{\partial \bar \gamma^i}$.
Thus we get an antiholomorphic bundle of vertex algebras $\cW_+(\bar T^*X)$ with fibre $\cW_+(\bar T^*_xX)\cong\cW_+^\gamma$ on $x\in U$, and the sheaf of  antiholomorphic sections of $\cW_+(\bar T^*X)$ is a sub sheaf of $\Omega^{0,*}_X$. We also get an antiholomorphic bundle of super commutative algebras $SW(\bar T^*X)$ with fibre $SW(\bar T^*_xX)\cong SW^\gamma$  on $x\in U$.   As antiholomorphic vector bundles, $SW(\bar T^*X)$ and $\cW_+(\bar T^*X)$ are isomorphic to the antiholomorphic vector bundle (\ref{eqn:isoE}).

Let
\begin{eqnarray}S^{\gamma}[k,l]&=&\{a\in S^{\gamma}|[L_{(1)}, a]=ka, [J_{(0)}, a]=l a\},\nonumber\\
S^{\gamma}[k,l,m]&=&\{a\in S^{\gamma}[k,l]|\text{the number of $\mathbf B's$ mines the number of $\mathbf \Gamma's$ in $a$ is  $m$}\}.\nonumber
\end{eqnarray}
Let $SW^{\gamma}[k,l]$ and $SW^{\gamma}[k,l,m]$ be the subspaces of $SW^{\gamma}$, which is spanned by $S^{\gamma}[k,l]$ and  $S^{\gamma}[k,l,m]$, respectively.
Since under the coordinate change, $\mathbf B$, $\mathbf b$, $\mathbf \Gamma$ and $\mathbf c$ change to $\mathbf B$, $\mathbf b$, $\mathbf \Gamma$ and $\mathbf c$, respectively.
$SW(\bar T^*X)$ has subbundles $SW(\bar T^*X)[k,l]$ and $SW(\bar T^*X)[k,l,m]$ with fibres isomorphic to $SW^{\gamma}[k,l]$ and $SW^{\gamma}[k,l,m]$ on $x\in U$, respectively.

By Lemma \ref{lem:hermitianform} and Remark \ref{rem:uniqure}, the positive definite Hermitian form on $\bar T^*_xX$ induces positive definite Hermitian forms on $SW(\bar T^*_xX)$ and $\bW(\bar T^*_xX)$, which satisfies Equation (\ref{eqn:propherm}).  Thus the K\"ahler metric on $X$ induces Hermitian metrics on   $SW(\bar T^*X)$ and $\cW_+(\bar T^*X)$. Let $\langle\_, \_\rangle$ be this Hermitian metric on $SW(\bar T^*X)$ induced from the Kahler metric of $X$.
Let $\nabla=\nabla^{1,0}+\bpartial' $ be the Chern connection  on $SW(\bar T^*X)$ corresponding to the Hermitian metric. We have
$$\nabla ab=(\nabla a)b+(-1)^{|a|}a\nabla b, \quad a, b \in \Omega^{*,*}_X(X,SW(\bar T^*X)).$$
Let
$\theta_{ij}=-\sum_kH^{ik}\bpartial H_{kj}$ be the connection one form.
Then for $k<0$,
\begin{eqnarray}\label{eq:nabla1}
\nabla^{1,0}\mathbf B^i_{(k)}&=\nabla^{1,0}\mathbf \Gamma^i_{(k)}= \nabla^{1,0}\mathbf b^i_{(k)}&= \nabla^{1,0}\mathbf c^i_{(k)}=0,\nonumber\\
\bpartial'\mathbf B^i_{(k)}&=\sum_i\theta_{ij}\mathbf B^i_{(k)}, \quad \quad  \bpartial' \mathbf b^i_{(k)}&=\sum_i\theta_{ij}\mathbf b^i_{(k)},\\
\bpartial' \mathbf \Gamma^i_{(k)}&=-\sum_i\theta_{ji}\mathbf \Gamma^i_{(k)}, \quad\quad\bpartial' \mathbf c^i_{(k)}&=-\sum_i\theta_{ji}\mathbf c^i_{(k)}.\nonumber
\end{eqnarray}
For $a\in S^\gamma$, assume $\nabla a=\sum_{a'\in S}\Theta_{a,a'} a'$. $\Theta_{a,a'}$ is zero unless $a'$ has the same numbers of $\mathbf B's$, $\mathbf \Gamma's$ $\mathbf b's$ and $\mathbf c's$ as $a$.

Under the Hermitian metric $\langle\_, \_\rangle$  on $SW(\bar T^*X)$, $SW(\bar T^*X)[k,l,m]$ are perpendicular to each other.
We rescale the metric  $\langle \_ , \_ \rangle$ on $SW(\bar T^*X)$  and get new Hermitian metrics $\langle \_ , \_ \rangle_{\lambda}, \lambda>0$   such that
$SW(\bar T^*X)[k,l,m]$ are perpendicular to each other and  $\langle a,a')_{\lambda}=\lambda^m \langle a,a'\rangle$ for any sections $a,a'$  of $SW(\bar T^*X)[k,l,m]$.
Using the Kahler metric on $X$, $\langle\_ , \_ \rangle_\lambda$ can be extended to  $SW(\bar T^*X)\otimes \wedge^*\bar T^*$.
Let $$(\_,\_)_\lambda:\Omega^{0,*}_X(SW(\bar T^*X))(X)\times \Omega^{0,*}_X(SW(\bar T^*X))(X)\to \mathbb C,$$
$$(a , a')_\lambda=\int_X\langle a , a' \rangle_\lambda, \quad a,a'\in \Omega^{0,*}_X(SW(\bar T^*X))(X).$$

\subsection{A canonical isomorphism of sheaves}

Let $\bar \cO(SW(\bar T^*X))$  be the sheaf of antiholomorphic sections of $SW(\bar T^*X)$. Let $\Omega^{0,k}_X(SW(\bar T^*X))$ be the sheaf of smooth $(0,k)$ forms with values in $SW(\bar T^*X)$ and let $\Omega^{0,*}_X(SW(\bar T^*X))=\oplus\Omega^{0,k}_X(SW(\bar T^*X))$. We have
$$ \Omega^{0,*}_X(SW(\bar T^*X))(U)={SW}^\gamma\otimes_{\mathbb C}\Omega^{0,*}_X(U).$$
$\Omega^{ch,*}_X $ is a $\bar \cO(SW(\bar T^*X))$ module, since the sheaf of antiholomorphic sections of $\cW_+(\bar T^*X)$ is a subsheaf of $\Omega^{ch,*}_X $.
Now we construct an isomorphism of $\bar \cO(SW(\bar T^*X))$ modules:
\begin{equation}\label{eqn:isoI}
I: \Omega^{0,*}_X(SW(\bar T^*X))\to \Omega^{ch,*}_X.
\end{equation}
Locally on $U$, let
$$I_{U}:SW^{\gamma}\otimes_{\mathbb C}\Omega^{0,*}_X(U)\to \Omega_X^{ch,*}(U), \quad a\otimes f\mapsto af.$$
By Equations (\ref{eqn:chang0}) and (\ref{eqn:chang1}), $I_{U}$ does not depend on the coordinate system we choose. So we get a homomorphism of sheaves of vector spaces $I$.
\begin{thm}\label{thm:isoI}
	$I$ is an isomorphism of $\bar \cO(SW(\bar T^*X))$ modules.
\end{thm}
\begin{proof}We only need to show that, locally on U, $I_U$ is an isomorphism of  $\bar \cO(SW(\bar T^*X))(U)$ modules.
	
	Obiously, $I_{U}$ is an injective homomorphism of $SW^\gamma$ modules. Let 
	$\bar{\cO}(U)$ be the space of antiholomorphic functions on $U$.
	$I_U$ is a $\bar \cO(SW(\bar T^*X))(U)=SW^\gamma\otimes_{\mathbb C}\bar{\cO}(U)$  module, since the antiholomorphic functions on $U$ commute with any elements in $\Omega_X^{ch,*}(U)$.
	
	Since 
	\begin{eqnarray*}\partial\gamma^i&=&\sum_j:H^{ji}\mathbf {\Gamma}^j:,\quad c^i=\sum_j:H^{ji}\mathbf c^j:,\quad   b^i=\sum_j:H_{ij}\mathbf b_j:, \\
		\beta^i&=&\sum_j:H_{ij} \mathbf B^j+\sum_{j, k, l}::\frac{\partial H_{ij}}{\partial \gamma^k}H^{lk}\mathbf c^l:\mathbf b^j:. 
	\end{eqnarray*}
	$\Omega_X^{ch,*}(U)$ is generated by  $\mathbf b^i,\mathbf c^i, \mathbf \Gamma^i, \mathbf B^i$ and $\Omega_X^{0,*}(U)$. According to their OPEs, any element in $\Omega_X^{0,*}(U)$ is a linear combination of elements like $af$. So $I_{U}$ is surjective.	
\end{proof}	
Through $I_{U}$, $\Omega_X^{ch,*}(U)$ is a free $\Omega_X^{0,*}(U)$ module. Theorem
\ref{thm:isoI} tells us that  $\Omega^{ch,*}_X$ is a $\Omega^{0,*}_X$ module.
\subsection{$\bpartial $ operator}
% Here is some basic properties of $\bpartial$:
%\begin{enumerate}
%\item $[L_{(0)}, \bpartial]=0 $;
%\item $\bpartial (a_{(n)}b)=(\bpartial a)_{(n)}b+(-1)^{|a|}a_{(n)}\bpartial b$;
%\item $\bpartial \partial \gamma^i=\bpartial b^i=\bpartial c^i=\bpartial\beta^i=0$;
%\item $[\bpartial, Q_{(0)}]=0$.
%\end{enumerate}
In this subsection, we study the operater $\bpartial$.

A straightforward calculation shows that
\begin{lemma}\label{lem:bpartial0}
	\begin{eqnarray*}
		\bpartial \mathbf \Gamma^j=-\sum_k:\mathbf \Gamma^k \theta_{kj}:,&
		\bpartial \mathbf c^j=-\sum_k:\mathbf c^k \theta_{kj}:,\\
		\bpartial \mathbf b^j=\sum_k:\mathbf b^k \theta_{jk}:,\quad &\quad\quad \quad\quad\quad\quad
		\bpartial \mathbf B^j=\sum_k(:\mathbf B^k \theta_{jk}:- :\mathbf b^kQ_{(0)}\theta_{jk}:). 
	\end{eqnarray*}
\end{lemma}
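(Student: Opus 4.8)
The strategy is to use that $\bpartial=\bar Q_{(0)}$ is an odd derivation of the vertex algebra structure, so it obeys the super-Leibniz rule $\bpartial(:AB:)=:(\bpartial A)B:+(-1)^{|A|}:A(\bpartial B):$ for the normally ordered product. Three inputs make the computation mechanical. First, $\bar Q(z)=\sum_k:\bar\beta^k\bar c^k:$ involves only antiholomorphic modes, which have trivial OPE with the holomorphic fields; hence $\bpartial$ annihilates each holomorphic generator $\partial\gamma^i$, $c^i$, $b^i$, $\beta^i$, while on a smooth function $f$ it reproduces the Dolbeault operator $\bpartial f=\sum_n\frac{\partial f}{\partial\bar\gamma^n}\bar c^n$ under the identification $\bar c^n\leftrightarrow d\bar\gamma^n$. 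Second, $\bpartial$ commutes with the holomorphic zero mode $Q_{(0)}$, a fact recorded earlier. Third, I will use the definition $\theta_{ij}=-H^{ik}\bpartial H_{kj}$ together with the relations $\mathbf\Gamma^k=:H_{ik}\partial\gamma^i:$, $\mathbf c^k=:H_{ik}c^i:$, $\mathbf b^k=:H^{ki}b^i:$, and $H_{ik}H^{kl}=\delta_i^l$.

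Since $\bpartial$ kills $\partial\gamma^i$, $c^i$, $b^i$, applying the Leibniz rule to the three definitions leaves only the differentiated metric coefficient:
\begin{equation*}
\bpartial\mathbf\Gamma^j=:(\bpartial H_{ij})\partial\gamma^i:,\qquad \bpartial\mathbf c^j=:(\bpartial H_{ij})c^i:,\qquad \bpartial\mathbf b^j=:(\bpartial H^{ji})b^i:.
\end{equation*}
For the first two I would contract with the inverse metric: inserting $\delta_i^l=H_{ik}H^{kl}$ and using $\theta_{kj}=-H^{kl}\bpartial H_{lj}$ rewrites $:(\bpartial H_{ij})\partial\gamma^i:$ as $-:\mathbf\Gamma^k\theta_{kj}:$, and likewise for $\mathbf c^j$. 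For $\mathbf b^j$ I would first differentiate the identity $H^{jl}H_{lk}=\delta_{jk}$ to obtain $\bpartial H^{ji}=-H^{jm}(\bpartial H_{mk})H^{ki}$; substituting this and recognizing $\mathbf b^k=:H^{ki}b^i:$ with $\theta_{jk}=-H^{jm}\bpartial H_{mk}$ yields $\bpartial\mathbf b^j=:\mathbf b^k\theta_{jk}:$.

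For the last generator I would exploit $\mathbf B^j=Q_{(0)}\mathbf b^j$ and the commutation of $\bpartial$ with $Q_{(0)}$ to write $\bpartial\mathbf B^j=Q_{(0)}(\bpartial\mathbf b^j)=Q_{(0)}(:\mathbf b^k\theta_{jk}:)$. Applying the Leibniz rule for the odd derivation $Q_{(0)}$, together with $Q_{(0)}\mathbf b^k=\mathbf B^k$ and the odd parity $|\mathbf b^k|=1$, gives exactly $:\mathbf B^k\theta_{jk}:-:\mathbf b^k Q_{(0)}\theta_{jk}:$, the claimed formula.

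The computation is routine in outline, and the one place demanding care is the fermionic sign bookkeeping. The coefficients $\bpartial H_{ij}$ are $\bar c$-valued, hence odd, so reordering them past the odd generators $c^i$, $b^i$ inside a normally ordered product produces signs such as $:\bar c^n c^i:=-:c^i\bar c^n:$; keeping these consistent with the placement of $\theta$ in $:\mathbf c^k\theta_{kj}:$ and $:\mathbf b^k\theta_{jk}:$ is precisely what pins down the signs in the statement, and it explains why the bosonic $\mathbf\Gamma^j$ and the fermionic $\mathbf c^j$ nonetheless carry the same sign (both transform like $\partial/\partial\bar\gamma^j$). I would also make explicit, once at the start, the two facts used repeatedly: that the antiholomorphic modes in $\bar Q(z)$ have no singular OPE with the holomorphic fields, so $\bpartial$ annihilates them, and that $\bpartial$ restricts to $\bar\partial$ on $C^\infty(U)$.
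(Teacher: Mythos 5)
Your approach coincides with the paper's: the paper supplies no argument for this lemma beyond the phrase ``a straightforward calculation,'' and the calculation it intends is exactly the one you outline --- apply the odd derivation $\bpartial=\bar Q_{(0)}$ to the defining normally ordered products, use that the holomorphic generators $\partial\gamma^i, c^i, b^i$ are $\bpartial$-closed while $\bpartial$ restricts to the Dolbeault operator on smooth functions, contract with the inverse metric via $\theta_{ij}=-H^{ik}\bpartial H_{kj}$ (equivalently $\bpartial H_{ij}=-H_{ik}\theta_{kj}$ and $\bpartial H^{ji}=\theta_{jk}H^{ki}$), and handle $\mathbf B^j=Q_{(0)}\mathbf b^j$ by moving $\bar Q_{(0)}$ past $Q_{(0)}$ and applying the Leibniz rule once more. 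Your intermediate identities $\bpartial\mathbf\Gamma^j=:(\bpartial H_{ij})\partial\gamma^i:$, $\bpartial\mathbf c^j=:(\bpartial H_{ij})c^i:$, $\bpartial\mathbf b^j=:(\bpartial H^{ji})b^i:$ are correct, and so is the final Leibniz step producing $:\mathbf B^k\theta_{jk}:-:\mathbf b^kQ_{(0)}\theta_{jk}:$.

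One caveat about the sign bookkeeping, which you rightly single out as the delicate point but then resolve with a claim that does not survive a strict audit. Two of your auxiliary sign statements are individually off, though they compensate. First, $Q_{(0)}$ and $\bar Q_{(0)}$ are both odd with trivial mutual OPE, so they anticommute rather than commute; strictly $\bpartial\mathbf B^j=-Q_{(0)}(\bpartial\mathbf b^j)$. Second, if one enforces Koszul signs with the odd coefficient $\theta$ written to the right inside the normal ordering (so that $:\mathbf c^k\theta_{kj}:=\mathbf c^k_{(-1)}\theta_{kj}=-(\theta_{kj})_{(-1)}\mathbf c^k$), then your own identity $\bpartial\mathbf c^j=:(\bpartial H_{ij})c^i:=-:\theta_{kj}\mathbf c^k:$ evaluates to $+:\mathbf c^k\theta_{kj}:$, and similarly $\bpartial\mathbf b^j=-:\mathbf b^k\theta_{jk}:$ --- that is, under strict conventions the fermionic formulas acquire the \emph{opposite} sign to the bosonic $\mathbf\Gamma^j$, contrary to your closing remark that careful reordering ``explains why they carry the same sign.'' The signs in the lemma as stated reflect the paper's working convention, visible throughout Section 4 (in $I:SW^{\gamma}\otimes_{\mathbb C}\Omega^{0,*}_X(U)\to\Omega^{ch,*}_X(U)$ and in expressions like $a\,f\,\theta(s,j)$), of moving $\Omega^{0,*}$-coefficients past sections without Koszul signs. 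Note that in your $\mathbf B^j$ step the two discrepancies cancel: $-Q_{(0)}\bigl(-:\mathbf b^k\theta_{jk}:\bigr)=:\mathbf B^k\theta_{jk}:-:\mathbf b^kQ_{(0)}\theta_{jk}:$, so the $\mathbf\Gamma^j$ and $\mathbf B^j$ formulas are convention-independent and all four formulas come out as stated once the paper's convention is made explicit. Your derivation is therefore sound in substance; the one step I would tighten is to state that convention at the outset instead of attributing the fermionic signs to strict reordering, which would in fact flip them.
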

Since $\sum_{i, j}H_{ij}d\gamma^i\wedge d\bar\gamma^j$ is a K\"ahler form, locally, there is a one form $\sum_j h_jd\bar\gamma^j$ with $\partial \sum_j h_jd\bar\gamma^j=\sum_{i,j}H_{ij}d\gamma^i\wedge d\bar\gamma^j$.
We have  $\theta_{ij}= -\mathbf B_{(0)}^i\bpartial h_j$.

Let
$$v=\sum_jQ_{(0)}:( \bpartial h_j)\mathbf b^j:.$$
By Lemma \ref{lem:bpartial0},  we have
\begin{lemma}\label{lem:vqual}
	$\bpartial \mathbf \Gamma^j=v_{(0)}\mathbf \Gamma^j$,\quad   $\bpartial \mathbf c^j=v_{(0)}\mathbf c^j$, \quad
	$\bpartial \mathbf b^j=v_{(0)}\mathbf b^j$,\quad $\bpartial \mathbf B^j=v_{(0)}\mathbf B^j$. So $\bpartial-v_{(0)}$ commutes with all the modes of $\mathbf \Gamma^j, \mathbf c^j, \mathbf b^j, \mathbf B^j$.
\end{lemma}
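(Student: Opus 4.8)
The plan is to check the four identities $v_{(0)}P=\bpartial P$ one generator at a time, with the right-hand sides taken from Lemma~\ref{lem:bpartial0} after the substitution $\theta_{ij}=-\mathbf B^i_{(0)}\bpartial h_j$; the concluding commutativity statement then drops out formally. First I would exploit that $v$ is $Q_{(0)}$-exact: since $Q_{(0)}Q_{(0)}=0$ we have $Q_{(0)}v=0$, and the derivation property of the zero mode $Q_{(0)}$ gives $[Q_{(0)},v_{(0)}]=(Q_{(0)}v)_{(0)}=0$ as a graded commutator. Because $Q_{(0)}$ and $\bpartial=\bar Q_{(0)}$ also graded-commute (Section~\ref{sec:three}) and both are odd, the $\mathbf B^j$ identity reduces to the $\mathbf b^j$ one: granting $v_{(0)}\mathbf b^j=\bpartial\mathbf b^j$ (proved below) and using $\mathbf B^j=Q_{(0)}\mathbf b^j$,
\[
v_{(0)}\mathbf B^j=v_{(0)}Q_{(0)}\mathbf b^j=-Q_{(0)}v_{(0)}\mathbf b^j=-Q_{(0)}\bpartial\mathbf b^j=\bpartial Q_{(0)}\mathbf b^j=\bpartial\mathbf B^j,
\]
so it suffices to treat $\mathbf\Gamma^j,\mathbf c^j,\mathbf b^j$.

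For these I would expand $v$ by the derivation property of $Q_{(0)}$. Writing $\bpartial h_k=\partial_{\bar\gamma^l}h_k\,\bar c^l$ and using $Q_{(0)}\bar c^l=0$, $Q_{(0)}f=\sum_m:c^m\partial_{\gamma^m}f:$ on functions, $\partial_{\gamma^m}h_k=H_{mk}$, and $Q_{(0)}\mathbf b^k=\mathbf B^k$, one finds
\[
v=::c^m\bpartial H_{mk}:\mathbf b^k:-:(\bpartial h_k)\mathbf B^k:.
\]
Now $v_{(0)}P$ is the simple-pole coefficient of $v(z)P(w)$, and the only OPEs in play are $\mathbf b^i\mathbf c^j\sim\delta_{ij}(z-w)^{-1}$, $\mathbf B^i\mathbf\Gamma^j\sim\delta_{ij}(z-w)^{-2}$, the contraction of the bare $c^m$ with the $b^i$ hidden in $\mathbf b^k=:H^{ki}b^i:$, and the pairing $\mathbf B^i_{(0)}\bpartial h_k=-\theta_{ik}$; the form-valued factors $\bpartial h_k$ and $\bpartial H_{mk}$ otherwise commute with $\mathbf b,\mathbf c,\mathbf\Gamma$. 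For $\mathbf c^j$ only the first term survives, the $\mathbf b^k$--$\mathbf c^j$ contraction setting $k=j$, and one matches $-:\mathbf c^k\theta_{kj}:$ to the output $:c^m\bpartial H_{mj}:$ via the collapse $\sum_k H^{km}\mathbf c^k=c^m$. For $\mathbf b^j$ again only the first term survives, now through the $c^m$--$b^i$ contraction inside $\mathbf b^j$, directly reproducing $:\mathbf b^k\theta_{jk}:$ after inserting $\theta_{jk}=-H^{jm}\bpartial H_{mk}$.

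The $\mathbf\Gamma^j$ case is where the real work sits: the first term contributes nothing and the whole answer comes from $-:(\bpartial h_k)\mathbf B^k:$ through the \emph{double} pole $\mathbf B^k\mathbf\Gamma^j\sim\delta_{kj}(z-w)^{-2}$, so extracting the simple pole produces a derivative of $\bpartial h_j$ rather than a bare contraction. One must then reorganize this, using $\partial_{\gamma^i}\bpartial h_j=\bpartial H_{ij}$ and $\sum_k H^{km}\mathbf\Gamma^k=\partial\gamma^m$, to land on the weight-zero expression $:\partial\gamma^i\bpartial H_{ij}:=-:\mathbf\Gamma^k\theta_{kj}:$ of Lemma~\ref{lem:bpartial0}, checking in particular that the sign is right and that no antiholomorphic-weight-raising pieces survive. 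Once the four identities hold, the final sentence is immediate: both $\bpartial$ and $v_{(0)}$ are zero modes, hence graded derivations of every mode product, so $[\bpartial,P_{(n)}]=(\bpartial P)_{(n)}$ and $[v_{(0)},P_{(n)}]=(v_{(0)}P)_{(n)}$; subtracting gives $[\bpartial-v_{(0)},P_{(n)}]=((\bpartial-v_{(0)})P)_{(n)}=0$ for every mode of $\mathbf\Gamma^j,\mathbf c^j,\mathbf b^j,\mathbf B^j$.

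I expect the single genuine obstacle to be this $\mathbf\Gamma^j$ computation, precisely because it is the one matching governed by a double rather than a simple pole: the simple-pole coefficient mixes a true translation/derivative of the form-valued field $\bpartial h_j$ with the normal-ordering corrections of the composite $:(\bpartial h_k)\mathbf B^k:$, and one has to see that these combine into exactly the one connection term with the correct sign and without spurious higher-weight contributions. The $\mathbf c^j$ and $\mathbf b^j$ matchings, and the reduction of $\mathbf B^j$, are by contrast routine single-contraction bookkeeping once the metric factors are absorbed through $\sum_k H^{km}\mathbf c^k=c^m$ and $\sum_k H^{km}\mathbf\Gamma^k=\partial\gamma^m$.
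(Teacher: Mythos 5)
You follow the same route as the paper: the paper's entire proof of this lemma is the phrase ``By Lemma \ref{lem:bpartial0}'', i.e.\ a direct check that $v_{(0)}$ reproduces the four formulas there, which is exactly what you propose. Your reduction of the $\mathbf B^j$ identity to the $\mathbf b^j$ one via $\{Q_{(0)},v_{(0)}\}=(Q_{(0)}v)_{(0)}=0$ is a nice shortcut the paper does not have, and your $\mathbf c^j$ and $\mathbf b^j$ cases are indeed routine and correct: there only simple poles enter and the matching with Lemma \ref{lem:bpartial0} is exact.

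The $\mathbf\Gamma^j$ case, however --- which you rightly single out as the crux but then only promise will work --- does not close as you assert, and this is a genuine gap. The simple pole of $:(\bpartial h_k)\mathbf B^k:(z)\,\mathbf\Gamma^j(w)$ produced through the double pole $\mathbf B^k\mathbf\Gamma^j\sim\delta_{kj}(z-w)^{-2}$ is $(\bpartial h_j)_{(-2)}\mathbf 1=\partial(\bpartial h_j)$, where $\partial$ is the \emph{full} translation operator of $\Omega_X^{ch,sm}(U)$, not just its holomorphic part. Explicitly (up to overall ordering signs),
\begin{equation*}
\partial(\bpartial h_j)\;=\;:(\partial_{\bar\gamma^l}H_{mj})\,\partial\gamma^m\,\bar c^l:\;+\;:(\partial_{\bar\gamma^m}\partial_{\bar\gamma^l}h_j)\,\partial\bar\gamma^m\,\bar c^l:\;+\;:(\partial_{\bar\gamma^l}h_j)\,\partial\bar c^l:\,.
\end{equation*}
The first summand is the desired $\bpartial\mathbf\Gamma^j=-:\mathbf\Gamma^k\theta_{kj}:$, but the last two have antiholomorphic conformal weight $1$, and nothing in the expansion of $v$ can cancel the $\partial\bar c^l$ term: the only other weight-one contribution (coming from the fact that the OPE $\mathbf B^k(z)\mathbf\Gamma^j(w)$ actually carries an extra single pole $-:H^{kl}(\partial_{\bar\gamma^m}H_{lj})\partial\bar\gamma^m:$ beyond the stated double pole) involves $\partial\bar\gamma$, never $\partial\bar c$. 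A clean test: for the flat metric $H_{ij}=\delta_{ij}$ with the admissible potential $h_j=\gamma^j+f_j(\bar\gamma)$ one gets $v=-:(\bpartial f_k)\beta^k:$ and $v_{(0)}\partial\gamma^j=-\partial(\bpartial f_j)\neq 0$, while $\bpartial\,\partial\gamma^j=0$; and for a non-flat metric no choice of $h_j$ removes the problem, since $\partial_{\bar\gamma^l}h_j\equiv 0$ would force $H_{ij}=\partial_{\gamma^i}h_j$ to be holomorphic and Hermitian, hence constant. So the identity $v_{(0)}\mathbf\Gamma^j=\bpartial\mathbf\Gamma^j$ can hold at best modulo elements of antiholomorphic weight $\geq 1$. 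The missing idea, which your plan (and the paper's one-line proof) needs in order to be complete, is precisely this: show that the discrepancy $(\bpartial-v_{(0)})\mathbf\Gamma^j$ has $\bar L_{(1)}$-weight $\geq 1$, that modes of such elements never produce weight-zero components out of weight-zero states, and hence that the lemma and its subsequent uses (Lemma \ref{lem:bpartial} onward, where both sides of the identities do lie in weight zero) are valid after projection to antiholomorphic weight zero. Without that supplement, the cancellation of ``antiholomorphic-weight-raising pieces'' you are counting on simply does not occur.
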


For $s\in \mathcal Z$, let $\theta(s,j)=\mathbf B_{(0)}^s\bpartial h_j$.
Then $\theta (e_i,j)=\theta_{i,j}$, $\theta (s+e_i,j)=\mathbf B_{(0)}^{i}\theta(s,j)$,
and $$[\mathbf B^{i}_{(m)}, \theta (s,j)_{(n)}]=\theta (s+e_i,j)_{(n+m)}.$$
Let $\Gamma^i(z)=\sum_{j\neq 0} \frac{1}{-j}\mathbf \Gamma^j_{(j)} z^{-j}.$
\begin{lemma}\label{lem:bpartial}For $a\in SW^{\gamma}$, $f\in \Omega^{0,*}_X(U)$,
	\begin{eqnarray}\label{eqn:bpartial}\bpartial(af)&=&(-1)^{|a|}a\bpartial f +\sum_{i,j}\sum_{s\in\mathcal Z}\frac{1}{s!} [::\Gamma^s\mathbf c^{i}:\mathbf b^j:_{(0)}, a]f\theta(s+e_i,j) \\
	& &+\sum_j\sum_{s\in\mathcal Z} \frac{1}{s!}[:\Gamma^s\mathbf B^j:_{(0)}, a]f\theta(s,j)-\sum_j\sum_{s\in\mathcal Z} \frac{1}{s!}[\Gamma^s_{(-1)}, a]f\theta(s+e_j,j)).\nonumber
	\end{eqnarray}
\end{lemma}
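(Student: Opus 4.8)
The plan is to reduce everything to the single decomposition $\bpartial = v_{(0)} + (\bpartial - v_{(0)})$ supplied by Lemma~\ref{lem:vqual}, and then to evaluate the two pieces separately on $af$. Writing
$$\bpartial(af) = (\bpartial - v_{(0)})(af) + v_{(0)}(af),$$
I would first dispose of the term $(\bpartial - v_{(0)})(af)$. By Lemma~\ref{lem:vqual} the odd operator $\bpartial - v_{(0)}$ supercommutes with every mode of $\mathbf\Gamma^j,\mathbf c^j,\mathbf b^j,\mathbf B^j$; since $a\in SW^\gamma$ is a polynomial in the negative modes of these generators, I may slide $\bpartial - v_{(0)}$ across $a$, picking up the sign $(-1)^{|a|}$, to obtain $(\bpartial - v_{(0)})(af)=(-1)^{|a|}a\,(\bpartial - v_{(0)})f$. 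On the antiholomorphic factor $f\in\Omega^{0,*}_X(U)$ the zero mode $v_{(0)}$ acts trivially, because $v$ is built from $\mathbf b^j$ and the connection coefficients, all of which commute with antiholomorphic functions; hence $(\bpartial - v_{(0)})f=\bpartial f$ is the ordinary Dolbeault operator and this piece contributes exactly the leading term $(-1)^{|a|}a\bpartial f$. As a consistency check, for $a=1$ all commutators below vanish and the formula collapses to $\bpartial f=\bpartial f$.

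It remains to identify $v_{(0)}(af)$ with the three sums. Since $v_{(0)}f=0$, the action reduces to the supercommutator $v_{(0)}(af)=\pm[v_{(0)},a]\,f$, so only the way $v_{(0)}$ differentiates the vertex-algebra factor $a$ and the $\gamma$-dependence of the connection coefficients survives. Here I would first rewrite $v$ explicitly: using $\mathbf B^j=Q_{(0)}\mathbf b^j$ and the derivation property of $Q_{(0)}$, together with $\theta_{ij}=-\mathbf B^i_{(0)}\bpartial h_j$, the state $v=Q_{(0)}\!:(\bpartial h_j)\mathbf b^j:$ splits into a fermionic piece of the shape $:(\mathbf B^i_{(0)}\bpartial h_j)\,\mathbf c^i\mathbf b^j:$ and a bosonic piece $:(\bpartial h_j)\mathbf B^j:$. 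These are the seeds of, respectively, the first sum (with the operator $::\Gamma^s\mathbf c^i:\mathbf b^j:_{(0)}$ and coefficient $\theta(s+e_i,j)$) and the second sum (with $:\Gamma^s\mathbf B^j:_{(0)}$ and coefficient $\theta(s,j)$).

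The heart of the computation is then the mode expansion. Each time a factor $\mathbf B^i_{(0)}$ coming from $v_{(0)}$ is commuted past a connection coefficient $\theta(s,j)_{(n)}$, the recursion $[\mathbf B^i_{(m)},\theta(s,j)_{(n)}]=\theta(s+e_i,j)_{(n+m)}$ raises the multi-index $s$ by $e_i$ and pairs it, through the double pole $\mathbf B^i(z)\mathbf\Gamma^j(w)\sim\delta_{ij}(z-w)^{-2}$, with one insertion of $\Gamma^i(z)=\sum_{j\neq 0}\tfrac1{-j}\mathbf\Gamma^j_{(j)}z^{-j}$. Resumming these insertions over the number of $\mathbf B_{(0)}$'s produces the factor $\tfrac1{s!}\Gamma^s$ weighting $\theta(s,j)$ or $\theta(s+e_i,j)$, in exactly the manner of the Taylor expansion already carried out in Lemma~\ref{lem:eqnL}. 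The remaining third sum, $-\sum_{s\in\mathcal Z}\tfrac1{s!}[\Gamma^s_{(-1)},a]f\,\theta(s+e_j,j)$, is the genuine contraction term: it records the value of the double pole when $\mathbf B^j_{(0)}$ from the bosonic piece of $v$ hits a $\mathbf\Gamma$ already present in $a$ rather than a connection coefficient, which is why it carries $\Gamma^s_{(-1)}$ and the raised index $s+e_j$.

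I expect the main obstacle to be precisely this combinatorial resummation: organizing the repeated commutators with $\mathbf B_{(0)}$ so that the multi-index sums, the $1/s!$ normalizations, and the three coefficient shifts $\theta(s,j),\theta(s+e_i,j),\theta(s+e_j,j)$ all emerge with the correct signs, while keeping track of the fermionic signs generated by moving the odd modes $\mathbf b^j,\mathbf c^j$ and $v_{(0)}$ through $a$. The structural input — Lemma~\ref{lem:vqual}, the coefficient recursion, and the analogy with Lemma~\ref{lem:eqnL} — makes each individual step routine, but assembling them into the stated closed form is the delicate part.
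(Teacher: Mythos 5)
Your decomposition is essentially the paper's (split $\bpartial$ through $v_{(0)}$ via Lemma \ref{lem:vqual}, then resum the commutator into Taylor form), but two things go wrong. The smaller one: your claim that $v_{(0)}$ kills $f\in\Omega^{0,*}_X(U)$ is false. The elements of $\Omega^{0,*}_X(U)$ are \emph{smooth} $(0,*)$-forms, not antiholomorphic ones, and the bosonic piece $:(\bpartial h_j)\mathbf B^j:$ of $v$ --- which you yourself exhibit --- has zero mode acting on such $f$ by $f\mapsto (\bpartial h_j)\,H^{ji}\partial f/\partial\gamma^i$, because $\mathbf B^j(z)f(w)\sim H^{ji}(\partial f/\partial\gamma^i)(z-w)^{-1}$. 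Hence neither $(\bpartial-v_{(0)})f=\bpartial f$ nor $v_{(0)}(af)=[v_{(0)},a]f$ holds. Your two claimed evaluations are each off by $(-1)^{|a|}a\,v_{(0)}f$ with opposite signs, so their sum accidentally gives the correct line $\bpartial(af)=(-1)^{|a|}a\bpartial f+[v_{(0)},a]f$; the paper reaches it without this detour, using only that $\bpartial$ satisfies the Leibniz rule, so $\bpartial(af)=(-1)^{|a|}a\bpartial f+[\bpartial,a]f$, and that $[\bpartial,a]=[v_{(0)},a]$ because $\bpartial-v_{(0)}$ commutes with every mode occurring in $a$. The value of $v_{(0)}$ on $f$ is never needed.

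The serious gap is that the passage from $[v_{(0)},a]f$ to the three sums of (\ref{eqn:bpartial}) is precisely the identity
\begin{equation*}
\theta(s,j)_{(k)}\,af\;=\;\sum_{s'\in\mathcal Z}\frac{1}{s'!}\,\Gamma^{s'}_{(k)}\,af\,\theta(s+s',j),
\end{equation*}
which the paper isolates as Equation (\ref{eqn:thetaaf}) and proves by induction: since $\theta(s,j)$ and $\Gamma^{s'}$ commute with all modes of $\mathbf b,\mathbf c,\mathbf\Gamma$, one reduces to $a=\mathbf B^{i_1}_{(k_1)}\cdots\mathbf B^{i_n}_{(k_n)}$, checks the base case $\theta(s,j)_{(k)}f=\frac{1}{(-k-1)!}(\partial^{-k-1}\theta(s,j))f$ for $k<0$ (zero for $k\geq 0$), and in the induction step plays $[\theta(s,j)_{(k)},\mathbf B^i_{(l)}]=-\theta(s+e_i,j)_{(k+l)}$ off against $[\mathbf B^i_{(l)},\Gamma^{s'}_{(k)}]$ so that the shifted terms cancel. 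Your proposal names the right ingredients (the recursion on $\theta$, the double pole of $\mathbf B$ with $\mathbf\Gamma$, the analogy with Lemma \ref{lem:eqnL}) but then explicitly defers the resummation as ``the delicate part.'' That resummation \emph{is} the lemma: everything else is the one-line Leibniz step. Also, your heuristic for the third sum is backwards --- the correction term $-\sum_s\frac{1}{s!}[\Gamma^s_{(-1)},a]f\,\theta(s+e_j,j)$ records the case where $\mathbf B^j_{(0)}$ differentiates a connection coefficient $\theta(s,j)$ (raising it to $\theta(s+e_j,j)$), not the case where it contracts a $\mathbf\Gamma$ inside $a$. To complete the proof you must state the displayed identity and carry out the induction; without it the closed form (\ref{eqn:bpartial}) has not been derived.
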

\begin{proof}
	$$\bpartial(af)=(-1)^{|a|}a\bpartial(f)+[\bpartial, a]f.$$
	By Lemma \ref{lem:vqual},
	$$
	[\bpartial, a]f=[v_{(0)}, a]f=\sum_{i,j}[::\theta(e_i,j)\mathbf c^i:\mathbf b^j:_{(0)}, a]f+\sum_j[:\theta(0,j)\mathbf B^j:_{(0)}, a]f.
	$$
	By comparing it with Equation (\ref{eqn:bpartial}), to show Equation (\ref{eqn:bpartial}), we only need to show that
	\begin{equation}\label{eqn:thetaaf}
	\theta(s,j)_{(k)}af=\sum_{s'\in \mathcal Z}\frac{1}{s'!}\Gamma^{s'}_{(k)}af\theta(s+s',j).
	\end{equation}
	The above Equations (\ref{eqn:thetaaf}) are just the Taylor expressions of $\theta(s,j)_{(k)}$. Since both $\theta(s,j)$ and $\Gamma^s$  commute with $\mathbf c^i,\mathbf b^i,\mathbf \Gamma^i$,
	we only need to show the  Equation (\ref{eqn:thetaaf}) when $a=\mathbf B^{i_1}_{(k_1)}\cdots \mathbf B^{i_n}_{(k_n)}$.
	
	This can be shown by induction on $n$. When $n=0$, $a=1$,
	if $k\geq 0$,  both sides of the equation is zero;  if $k<0$
	$$\theta(s,j)_{(k)} f=\frac{1}{(-k-1)!}(\partial^{-k-1}\theta(s,j))f=\sum_{s'\in\mathcal Z}\frac{1}{s'!}\Gamma^{s'}_{(k)}f\theta(s+s',j).$$
	
	If Equation (\ref{eqn:thetaaf}) is true for $a$, for $l<0$,
	\begin{eqnarray*}\theta(s,j)_{(k)}\mathbf B_{(l)}^ia f
		&=&\mathbf B_{(l)}^i\theta(s,j)_{(k)}af+[\theta(s,j)_{(k)}, \mathbf B_{(l)}^i]af \\
		&=& \mathbf B_{(l)}^i\theta(s,j)_{(k)}af-\theta(s+e_i,j)_{(k+l)}af\\
		&=& \mathbf B_{(l)}^i\sum_{s'\in \mathcal Z}\frac{1}{s'!}\Gamma^{s'}_{(k)}af\theta(s+s',j)-\sum_{s'\in \mathcal Z}\frac{1}{s'!}\Gamma^{s'}_{(k+l)}af\theta(s+s'+e_i,j)\\
		&=& \sum_{s'\in \mathcal Z}\frac{1}{s'!} [\mathbf B_{(l)}^i, \Gamma^{s'}_{(k)}]a f\theta(s+s',j)+\sum_{s'\in \mathcal Z}\frac{1}{s'!} \Gamma^{s'}_{(k)}\mathbf B_{(l)}^ia f\theta(s+s',j)\\
		& &-\sum_{s'\in \mathcal Z}\frac{1}{s'!}\Gamma^{s'}_{(k+l)}af\theta(s+s'+e_i,j)\\
		&=& \sum_{s'\in \mathcal Z}\frac{1}{s'!} \Gamma^{s'}_{(k)}\mathbf B_{(l)}^ia f\theta(s+s',j)
	\end{eqnarray*}
	So Equation (\ref{eqn:thetaaf}) is true for $\mathbf B_{(l)}^ia $.
\end{proof}
\begin{lemma}\label{lem:bpartialb}Let $a\in SW^{\gamma}$  and $f\in \Omega^{0,*}_X(U)$, then
	\begin{eqnarray*} \bpartial(af)-I_{U}(\bpartial'(a\otimes f))&=& \sum_{i,j}\sum_{s\in\mathcal Z, |s|>0}\frac{1}{s!} [::\Gamma^s\mathbf c^{i}:\mathbf b^j:_{(0)}, a]f\theta(s+e_i,j) \\
		&+&\sum_{j} \sum_{s\in\mathcal Z, |s|>1} \frac{1}{s!}[:\Gamma^s\mathbf B^j:_{(0)}-\Gamma^s_{(-1)}\mathbf B^j_{(0)}, a]f\theta(s,j)\\
		&+&\sum_{j}\sum_{s\in\mathcal Z,|s|>1} \frac{1}{s!}[\Gamma^s_{(-1)}, a](\mathbf B_{(0)}^j f)\theta(s,j).
	\end{eqnarray*}
\end{lemma}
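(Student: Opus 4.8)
The plan is to obtain the identity as a bookkeeping reorganization of the explicit formula for $\bpartial(af)$ proved in Lemma \ref{lem:bpartial}, peeling off the piece that equals $I(\bpartial'(a\otimes f))$ and showing that what remains is exactly the displayed sums. Write the three sums in Equation (\ref{eqn:bpartial}) as $T_1,T_2,T_3$, so that $\bpartial(af)=(-1)^{|a|}a\bpartial f+T_1+T_2+T_3$. Since $\bpartial'$ is the $(0,1)$-part of the Chern connection, it is a derivation obeying the Leibniz rule built into $\nabla$, so $\bpartial'(a\otimes f)=(\bpartial'a)f+(-1)^{|a|}a\,\bpartial f$ and hence $I(\bpartial'(a\otimes f))=(-1)^{|a|}a\,\bpartial f+I((\bpartial'a)f)$. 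The term $(-1)^{|a|}a\bpartial f$ matches the corresponding term in Lemma \ref{lem:bpartial} and cancels in the difference, so everything reduces to identifying $I((\bpartial'a)f)$ with the low-order part of $T_1,T_2,T_3$.

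First I would reorganize the third sum. Using the relation $\theta(s+e_j,j)=\mathbf B^j_{(0)}\theta(s,j)$ together with the fact that $\mathbf B^j_{(0)}$ acts on $\Omega^{ch,*}_X(U)$ as the first-order operator $H^{ji}\frac{\partial}{\partial \gamma^i}$ on the function factor while commuting with every element of $SW^\gamma$, I split $\mathbf B^j_{(0)}(f\theta(s,j))=(\mathbf B^j_{(0)}f)\theta(s,j)+f\,\mathbf B^j_{(0)}\theta(s,j)$. Because $\mathbf B^j_{(0)}$ commutes with $a$, the first piece contributes $\sum_s\frac1{s!}[\Gamma^s_{(-1)},a](\mathbf B^j_{(0)}f)\theta(s,j)$, while the second is absorbed into the commutator $-[\Gamma^s_{(-1)}\mathbf B^j_{(0)},a]f\theta(s,j)$. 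Adding this to $T_2$ turns $T_2+T_3$ into $\sum_s\frac1{s!}[:\Gamma^s\mathbf B^j:_{(0)}-\Gamma^s_{(-1)}\mathbf B^j_{(0)},a]f\theta(s,j)+\sum_s\frac1{s!}[\Gamma^s_{(-1)},a](\mathbf B^j_{(0)}f)\theta(s,j)$, which are precisely the middle and last sums of the statement, but summed over all $s\in\mathcal Z$ rather than over $|s|>1$.

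It then remains to show that the terms with $|s|\le 1$ in these reorganized sums, together with the $s=0$ term of $T_1$, add up to exactly $I((\bpartial'a)f)$. The key point is that $\Gamma^k_{(-1)}=0$ for every single index $k$, since the integrated field $\Gamma^k(z)=\sum_{n\ne 0}\frac1{-n}\mathbf\Gamma^k_{(n)}z^{-n}$ has no zero-frequency mode; consequently all $|s|=1$ contributions of $\Gamma^s_{(-1)}$-type and $(\mathbf B^j_{(0)}f)$-type vanish, and the surviving $|s|=1$ piece is just $\sum_k[:\Gamma^k\mathbf B^j:_{(0)},a]f\theta_{kj}$, while the $s=0$ term of $T_1$ is $[:\mathbf c^i\mathbf b^j:_{(0)},a]f\theta_{ij}$. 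Using Equation (\ref{eq:nabla1}) and the fact that $[:\mathbf c^i\mathbf b^j:_{(0)},\,\cdot\,]$ and $[:\Gamma^k\mathbf B^j:_{(0)},\,\cdot\,]$ act on $SW^\gamma$ as the count-preserving derivations dual to the OPEs of $\mathbf b^i\mathbf c^j$ and $\mathbf B^i\mathbf\Gamma^j$, these two leading expressions realize $\bpartial'$ on the $\mathbf b,\mathbf c$ and on the $\mathbf B,\mathbf\Gamma$ generators respectively, i.e. their sum is $I((\bpartial'a)f)$. Subtracting $I(\bpartial'(a\otimes f))$ therefore removes exactly the $s=0$ term of the first sum and the $|s|\le 1$ terms of the reorganized second and third sums, leaving the sums over $|s|>0$ and $|s|>1$ in the statement.

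The main obstacle is this last step: verifying, with correct signs and normal orderings (via Equation (\ref{eq:circp})), that the degree-one commutator operators reproduce $\bpartial'$ on the nose. Two points need care. First, the extra summand $-:\mathbf b^kQ_{(0)}\theta_{jk}:$ in $\bpartial\mathbf B^j$ from Lemma \ref{lem:bpartial0} must be seen to produce no spurious $|s|=1$ term but only higher $\theta(s,j)$ corrections, through $\theta(s+e_i,j)=\mathbf B^i_{(0)}\theta(s,j)$, as is already encoded in Lemma \ref{lem:bpartial}. Second, the vanishing $\Gamma^k_{(-1)}=0$ must be invoked to discard precisely the weight-two forms $\theta(e_k+e_j,j)$ that would otherwise appear from $\Gamma^{e_k}_{(-1)}\mathbf B^j_{(0)}$, so that the leading part is genuinely first order in the connection. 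Once these are checked the identity follows by collecting terms.
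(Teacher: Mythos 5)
Your proposal is correct and follows essentially the same route as the paper's own proof: both start from Lemma \ref{lem:bpartial}, identify the $|s|\le 1$ terms (via $[\mathbf B^j_{(0)},a]=0$, $\Gamma^i_{(-1)}=0$, and the identification $[:\Gamma^i\mathbf B^j:_{(0)}+:\mathbf c^i\mathbf b^j:_{(0)},a]\,\theta(e_i,j)=\bpartial' a$) with $I(\bpartial'(a\otimes f))$, and then trade the terms $[\Gamma^s_{(-1)},a]f\theta(s+e_j,j)$ for $[\Gamma^s_{(-1)}\mathbf B^j_{(0)},a]f\theta(s,j)-[\Gamma^s_{(-1)},a](\mathbf B^j_{(0)}f)\theta(s,j)$ using $\theta(s+e_j,j)=\mathbf B^j_{(0)}\theta(s,j)$. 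The only difference is the order of these two steps (you reorganize the third sum before subtracting, the paper after), which is immaterial.
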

\begin{proof}
	$\mathbf B^j_{(0)}$ commutes with elements of $S_0^\gamma$, so $[\mathbf B^j_{(0)}, a]=0$.
	
	Now  $\sum_{i,j}[:\Gamma^i\mathbf B^j:_{(0)}+:\mathbf c^i \mathbf b^j:_{(0)}, a]\otimes \theta(e_i,j)=\bpartial' a$. So
	$$I_{U}(\bpartial'(a\otimes f))=(-1)^{|a|}a \bpartial f+\sum_{i,j}\sum [:\Gamma^i\mathbf B^j:_{(0)}+:\mathbf c^i \mathbf b^j:_{(0)}, a]\otimes f\theta(e_i,j).$$
	When $|s|=0$, $[\Gamma^s_{(k)}, a]=0$ and by the definition, $\Gamma^i_{(-1)}=0$.
	By Lemma \ref{eqn:bpartial}, we get
	$$ \bpartial(af)-I_{U}(\bpartial'(a\otimes f)= \sum_{i,j}\sum_{s\in\mathcal Z, |s|>0}\frac{1}{s!} [::\Gamma^s\mathbf c^{i}:\mathbf b^j:_{(0)}, a]f\theta(s+e_i,j) $$
	$$
	+ \sum_{j}\sum_{s\in\mathcal Z, |s|>1} \frac{1}{s!}[:\Gamma^s\mathbf B^j:_{(0)}, a]f\theta(s,j)-\sum_{j}\sum_{s\in\mathcal Z,|s|>1} \frac{1}{s!}[\Gamma^s_{(-1)}, a]f\theta(s+e_j,j).$$
	Now
	$$[\Gamma^s_{(-1)}\mathbf B^j_{(0)}, a]f\theta(s,j)=[\Gamma^s_{(-1)}, a]\mathbf B^j_{(0)}(f\theta(s,j))$$
	$$=[\Gamma^s_{(-1)}, a](\mathbf B^j_{(0)}f)\theta(s,j)+[\Gamma^s_{(-1)}, a]f\theta(s+e^j,j).$$
	We get the equation.
\end{proof}
\begin{lemma}\label{lem:bpartiala}Let $a\in S^\gamma[k,l,m+1]$  and $f\in \Omega^{0,*}_X(U)$, then
	\begin{equation}\label{eq:bpartiala}
	\bpartial a f-I_{U}(\bpartial'(a\otimes f))=\sum_{n\leq m}\sum_{a'\in S^\gamma[k,l,n]} a' (Q_{a,a'}f).
	\end{equation}
	Here  $Q_{a,a'}$ only depend on $a,a'$ and $H_{ij}$ and  $Q_{a,a'}$ is a first order differential operators which takes values in   $\Omega^{0,1}(U)$.
\end{lemma}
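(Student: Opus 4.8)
The plan is to take the explicit formula for the defect $\bpartial(af)-I(\bpartial'(a\otimes f))$ furnished by Lemma \ref{lem:bpartialb} and show that each of its terms strictly lowers the grading that counts $\mathbf{B}$'s minus $\mathbf{\Gamma}$'s; the upper-triangular statement (\ref{eq:bpartiala}) is then just the bookkeeping of this drop together with a reading-off of the coefficients.

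First I would make explicit the integer grading $w$ on $SW^\gamma$ that assigns to a monomial the number of $\mathbf{B}$-modes minus the number of $\mathbf{\Gamma}$-modes, so that, by definition, $S[k,l,m]$ is the span of those $a\in S[k,l]$ with $w(a)<m$ and $a\in S[k,l,m+1]$ means $w(a)\le m$. The two nontrivial OPEs pair $\mathbf{B}$ with $\mathbf{\Gamma}$ and $\mathbf{b}$ with $\mathbf{c}$, so every contraction preserves $w$; consequently each basic field is $w$-homogeneous, with $\mathbf{B}^i$ of $w$-degree $+1$, $\mathbf{\Gamma}^i$ (hence each $\Gamma^i$, being built from $\mathbf{\Gamma}$-modes) of $w$-degree $-1$, and $\mathbf{b}^i,\mathbf{c}^i$ of $w$-degree $0$. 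Therefore any normally ordered product of these fields is $w$-homogeneous, and commuting such an operator with the monomial $a$ shifts $w$ by exactly the $w$-degree of the operator.

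Next I would apply this to the three families of terms in Lemma \ref{lem:bpartialb}. The operator $::\Gamma^s\mathbf{c}^i:\mathbf{b}^j:$ has $w$-degree $-|s|$; both $:\Gamma^s\mathbf{B}^j:$ and $\Gamma^s_{(-1)}\mathbf{B}^j_{(0)}$ have $w$-degree $-|s|+1$; and $\Gamma^s_{(-1)}$ has $w$-degree $-|s|$. Since the three sums run over $|s|>0$, $|s|>1$ and $|s|>1$ respectively, every one of these operators has $w$-degree $\le -1$. Hence after commuting with $a$ each resulting basis monomial $a'$ satisfies $w(a')\le w(a)-1\le m-1$. Moreover $\bpartial=\bar Q_{(0)}$ and the connection part $\bpartial'$ change only the antiholomorphic form degree and, as recorded in (\ref{eq:nabla1}), send each field-mode to a form times a mode of the same type; thus they preserve the holomorphic conformal weight and fermionic charge, while the coefficients below are ordinary forms of holomorphic weight and charge zero. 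Therefore every $a'$ retains the weight $k$ and charge $l$ of $a$, so $a'\in S[k,l,m]$, and expanding the defect in the basis $S$ yields an expression supported on $S[k,l,m]$, which is (\ref{eq:bpartiala}).

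Finally I would identify the coefficients $Q_{a,a'}$. Collecting the scalars produced by the contractions in each commutator (these are combinatorial and depend only on $a$ and $a'$), the $(0,1)$-form factors $\theta(s+e_i,j)$ and $\theta(s,j)$ (which are $(0,1)$-forms because $\theta_{ij}=-H^{ik}\bpartial H_{kj}$ and $\mathbf{B}^\bullet_{(0)}$ preserves form degree), and the operator $\mathbf{B}^j_{(0)}=H^{ji}\partial/\partial\gamma^i$ acting on $f$ in the third sum, one reads off that $Q_{a,a'}$ depends only on $a$, $a'$ and the metric $H_{ij}$, takes values in $\Omega^{0,1}(U)$, and is of order at most one in $f$, the only derivative being the first-order $\mathbf{B}^j_{(0)}f$. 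The step that must be gotten right is precisely the uniform $w$-bookkeeping: the constraints $|s|>0$ and $|s|>1$ appearing in Lemma \ref{lem:bpartialb} are exactly what push the $w$-degree of every operator down to $\le -1$. Once the $w$-homogeneity and the weight/charge invariance of $\bpartial,\bpartial'$ are in place, the rest is routine.
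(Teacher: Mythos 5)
Your proof is correct and takes essentially the same route as the paper's: both start from the explicit defect formula of Lemma \ref{lem:bpartialb}, observe that the operators there preserve conformal weight and fermionic charge while lowering the number of $\mathbf B$'s minus the number of $\mathbf \Gamma$'s by at least one (thanks to the constraints $|s|>0$ and $|s|>1$), and note that the only derivative hitting $f$ is the single first-order $\mathbf B^j_{(0)}$, which yields the stated form of $Q_{a,a'}$. Your explicit $w$-grading is simply a more detailed write-up of the paper's bookkeeping.
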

\begin{proof}
	The operators $::\Gamma^s\mathbf c^{i}:\mathbf b^j:_{(0)}$, $:\Gamma^s\mathbf B^j:_{(0)}$ and $\Gamma^s_{(-1)}$ preserves the conformal weights $k$ and fermion numbers $l$. When $|s|>0$,
	$::\Gamma^s\mathbf c^{i}:\mathbf b^j:_{(0)}$ and $\Gamma^s_{(-1)}$ decrease the number of $\mathbf B$ mines the number of $\Gamma$ at least one. When $|s|>1$, $:\Gamma^s\mathbf B^j:_{(0)}$ decreases the number of $\mathbf B$ mines the number of $\Gamma$ at least one. In each term, the modes $\mathbf B_0$ appear at most once, which act as a first order differential operator. So the right-hand side of the equation in Lemma \ref{lem:bpartialb} can be written as $\sum_{n\leq m}\sum_{a'\in S[k,l,n]} a' (Q_{a,a'}f)$. $Q_{a,a'}$ only depend on $a,a'$ and $H_{ij}$ and  $Q_{a,a'}$ is a first order differential operators which takes value in   $\Omega^{0,1}(U)$.
	
\end{proof}
\subsection{Elliptic complex} Let $\bar D=I^*(\bpartial)$ be the pullback of the operator $\bpartial$ through the isomorphism $I$ in Equation (\ref{eqn:isoI}).That is, 
$$\bar D a =I^{-1}_U(\bpartial I_U(a)),\quad \text{ for any } a\in \Omega^{0,*}_X(SW(\bar T^*X))(U).$$ By Lemma \ref{lem:bpartiala},
$\bar D$ is a first-order differential operators on $\Omega^{0,*}_X(SW(\bar T^*X))$, which locally
has the form
\begin{equation}\label{eqn:repD}\bar D =\sum_i d\bar \gamma^i \wedge \frac {\partial}{\partial\bar \gamma^i}+\sum_{i,j}M_{ij}d\bar \gamma^i\wedge\frac {\partial}{ \partial\gamma^j}+\sum A_id\bar \gamma^i\wedge.
\end{equation}
Here $M_{ij}$ maps $\Omega^{0,*}_X(SW(\bar T^*X)[k,l,m+1])(U)$ to $\Omega^{0,*}_X(\bigoplus_{n\leq m}SW(\bar T^*X)[k,l,n])(U)$ and  $A_i$ maps $\Omega^{0,*}_X(SW(\bar T^*X)[k,l,m+1])(U)$ to $\Omega^{0,*}_X(\bigoplus_{n\leq m+1}SW(\bar T^*X)[k,l,n])(U)$.
\begin{thm}$(\Omega^{0,*}_X(SW(\bar T^*X))(X),\bar D)$ is an elliptic complex.
\end{thm}
\begin{proof}Since $\bar D^2=0$ since $\bpartial^2=0$. $$(\Omega^{0,*}_X(SW(\bar T^*X))(X),\bar D)$$ is a complex.
	
	The symbol of $\bar D$  is
	$$\sigma(\bar D)=\sum_i \xi_i d\bar\gamma^i\wedge+\sum_{i,j} M_{ij}\bar \xi_jd\bar \gamma^i\wedge.$$
	We only need to show that for any nonzero $\xi =(\xi_1,\cdots, \xi_d)$, any $p\in X$,  the symbol sequence
	$$\cdots \to SW(\bar T^*X)_p\otimes \wedge^i \bar T^*_p\xrightarrow{ \sigma(\bar D)(\xi)}   SW(\bar T^*X)_p\otimes \wedge^{i+1} \bar T^*_p\to \cdots $$
	is exact.
	Let $M_i= \xi_i Id +\sum_j M_{ij}\bar \xi_j$,
	$$\sigma(\bar D)(\xi)=\sum_i M_id\bar \gamma^i\wedge.$$ Without loss of generality, we can assume $\xi_1\neq 0$, Then $M_1$ is an invertible matrix.
	$\bar D^2=0$ implies  $$\sigma(\bar D)(\xi)^2=\sum_{i,j} M_iM_jd\bar \gamma^i\wedge d\bar \gamma^j\wedge=0.$$ So $M_iM_j=M_jM_i$.
	Each $a\in SW(\bar T^*X)_p\otimes \wedge^i\bar T^*_p$ can be uniquely written as:
	$$a=a_1+d\bar \gamma^1\wedge a_2$$
	where $a_1$ and $a_2$ do not involve $d\bar \gamma^1$.
	If $\sigma(\bar D)(\xi) a=0$, it follows that
	$$M_1d\bar \gamma^1\wedge a_1+\sum_{i\geq 2}M_id\bar \gamma^i\wedge a_1+\sum_{i\geq 2}d\bar \gamma^i\wedge d\bar \gamma^1\wedge a_2=0.$$
	$$M_1 a_1=\sum_{i\geq 2}d\bar \gamma^i \wedge a_2 \quad \text{and }\quad \sum_{i\geq 2}M_id\bar \gamma^i\wedge a_1=0.$$
	Let $b=M_1^{-1}a_2\in  SW(\bar T^*X)_p\otimes \wedge^{i-1} \bar T^*_p$, then
	$$\sigma(\bar D)(\xi) b=M_1d\bar \gamma^1\wedge M_1^{-1}a_2+\sum_{i\geq 2}M_id\bar \gamma^i\wedge M_1^{-1}a_2=a.$$
	This proves the theorem.
\end{proof}

Let $\bar D^*_\lambda$ be the dual operator of $\bar D$ under $(-,-)_\lambda$, so
$(\bar Da,b)_\lambda=(a,\bar D^*_\lambda)_\lambda.$
Let
$$\Delta^i_\lambda=\bar D \bar D^*_\lambda+\bar D^*_\lambda\bar D:\Omega^{0,i}_X(SW(\bar T^*X))(X)\to \Omega^{0,i}_X(SW(\bar T^*X))(X).$$
The Laplacian $\Delta^i_\lambda$  are  elliptic operators. By the Hodge theorem for elliptic complexes,
\begin{corollary}\label{cor:isodelta}
	$$H^i(X,\Omega^{ch,*}_X)\cong H^i(\Omega^{0,*}_X(SW(\bar T^*X))(X),\bar D) =\Ker\Delta^i_\lambda.$$
\end{corollary}
\section{Global sections on compact Ricci-flat K\"ahler manifolds}\label{sec:six}
In this section, we assume $X$ is a compact Ricci-flat K\"ahler manifold. We will calculate the space of global sections of chiral de Rham complex on $X$.
\begin{remark}In the literature, a Calabi-Yau manifold can be a compact Ricci-flat K\"ahler manifold or a compact K\"ahler manifold with holonomy group $SU(d)$.  In this paper, we use the term compact Ricci-flat K\"ahler manifold or compact K\"ahler manifold with holonomy group $SU(d)$ instead of Calabi-Yau manifold.
\end{remark}

\subsection{Smooth sections killed by $\bar D$}
$\bar D $ can be written in the form
$$\bar D=\bpartial'+\sum_{i=1}^\infty F_i.$$
Here $F_i$ are first differential operators which map $\Omega^{0,*}_X(SW(\bar T^*X)[k,l,m])$ to $\Omega^{0,*+1}_X(SW(\bar T^*X)[k,l,m-i])$.
Locally, if $a\in SW^\gamma$, by Lemma \ref{lem:bpartialb},
\begin{eqnarray}\label{eqn:Fn} I_{U}(F_n(a\otimes f))&=&\sum_{i,j}\sum_{s\in\mathcal Z, |s|=n}\frac{1}{s!} [::(\Gamma^s\mathbf c^{i}:\mathbf b^j:_{(0)}, a]f\theta(s+e_i,j)
\nonumber \\
&+& \sum_{j}\sum_{s\in\mathcal Z, |s|=n+1} \frac{1}{s!}[:\Gamma^s\mathbf B^j:_{(0)}-\Gamma^s_{(-1)}\mathbf B^j_{(0)}, a]f\theta(s,j)\\
&+&\sum_{j}\sum_{s\in\mathcal Z,|s|=n} \frac{1}{s!}[\Gamma^s_{(-1)}, a](\mathbf B_{(0)}^j f)\theta(s,j).\nonumber
\end{eqnarray}
We will show the following lemma in this subsection:
\begin{lemma}\label{lem:barDequal}  If $X$ is a compact Ricci-flat K\"ahler manifold, a smooth section $a$ of $SW(\bar T^*X)$ satisfies $\bar D a=0$ if and only if $\bpartial' a=0$ and $\sum_{i=1}^\infty  F_i a=0$.
\end{lemma}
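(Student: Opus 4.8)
The forward implication will be immediate from the decomposition $\bar D=\bpartial'+\sum_{i\ge 1}F_i$ of (\ref{eqn:repD}): if $\bpartial' a=0$ and $\sum_i F_i a=0$ then $\bar D a=0$. So the whole content lies in the converse, and my plan is to isolate the top, filtration‑preserving piece $\bpartial'$ of $\bar D$ by pairing it against its own adjoint, feeding in the Ricci‑flat hypothesis only through Lemma \ref{eqn:Dbpartial}.

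First I would exploit the grading by $\deg_m:=(\text{number of }\mathbf B\text{'s})-(\text{number of }\mathbf\Gamma\text{'s})$ along which $SW(\bar TX)[k,l,m]$ is filtered. By (\ref{eq:nabla1}) the connection piece $\bpartial'$, and hence its $(-,-)_\lambda$‑adjoint $\bpartial'^*$, preserve $\deg_m$, whereas $F_i$ lowers $\deg_m$ by exactly $i$. The crucial step is to show that the graded commutator $[\bpartial'^*,\bar D]$ is itself of $\deg_m$‑degree zero. This is precisely where Ricci‑flatness is used: by Lemma \ref{eqn:Dbpartial} the operator $[\bpartial'^*,\bar D]$ is $-I^{-1}$ of
$$\sum_i \bar\beta^i_{(0)}\mathbf B^i_{(0)}+\sum_{i,k}(Q_{(0)}:\mathbf b^k\theta_{ik}:)_{(0)}\bar b^i_{(0)}+\sum_{i,k}\theta_{ik}(\tfrac{\partial}{\partial\bar\gamma^i})\mathbf B^k_{(0)}$$
composed with $I$, and every term here is built from the zero modes $\mathbf B_{(0)}$, $\bar\beta_{(0)}=\partial/\partial\bar\gamma$, $\bar b_{(0)}=\iota_{\partial/\partial\bar\gamma}$ and functions (the coefficient $\sum_i\theta_{ik}(\partial/\partial\bar\gamma^i)$ being antiholomorphic exactly by Ricci‑flatness), none of which changes the net count of $\mathbf B$'s versus $\mathbf\Gamma$'s.

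Granting this, I would decompose $[\bpartial'^*,\bar D]=[\bpartial'^*,\bpartial']+\sum_{i\ge 1}[\bpartial'^*,F_i]$ by $\deg_m$‑degree. The left side and $[\bpartial'^*,\bpartial']$ have degree $0$, while $[\bpartial'^*,F_i]$ has degree $-i$; since these negative degrees are distinct, matching degrees forces $[\bpartial'^*,F_i]=0$ for every $i\ge 1$, so $[\bpartial'^*,\bar D]=[\bpartial'^*,\bpartial']$. Now take $a$ a smooth section of $SW(\bar TX)$ with $\bar D a=0$. Because $a$ has antiholomorphic form‑degree $0$, we have $\bpartial'^* a=0$; hence both terms of the bracket vanish and $[\bpartial'^*,\bar D]a=0$, while $[\bpartial'^*,\bpartial']a=\bpartial'^*\bpartial' a$ (again using $\bpartial'^* a=0$). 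Therefore $\bpartial'^*\bpartial' a=0$, and pairing with $a$ under the positive‑definite form gives $0=(\bpartial'^*\bpartial' a,a)_\lambda=(\bpartial' a,\bpartial' a)_\lambda$, so $\bpartial' a=0$; then $\sum_{i\ge 1}F_i a=\bar D a-\bpartial' a=0$.

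The hard part will be the degree computation, i.e. that $[\bpartial'^*,\bar D]$ carries $\deg_m$‑degree $0$, equivalently $[\bpartial'^*,F_i]=0$ for all $i\ge 1$; this is the entire point at which compactness and Ricci‑flatness are consumed, via the explicit commutator of Lemma \ref{eqn:Dbpartial}. Everything downstream is a routine Bochner‑type positivity argument requiring only that $a$ be global (so that $\bpartial'^*$ is genuinely adjoint to $\bpartial'$ under $(-,-)_\lambda$) and that $(-,-)_\lambda$ be positive definite.
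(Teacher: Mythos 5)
Your strategy is genuinely different from the paper's: the paper decomposes $a=\sum_m a_m$ along the $\mathbf B$/$\mathbf \Gamma$ grading and runs a downward induction, invoking Ricci-flatness twice at each stage (first the Bochner consequence that $\bpartial'$-closed sections of $SW(\bar TX)[k,l]$ are parallel, then Lemma \ref{eqn:Dbpartial} applied to those parallel sections to kill the cross terms $(\bpartial' a_n, F_i a_{n+i})$), whereas you aim for a single operator identity $[\bpartial'^*,\bar D]=[\bpartial'^*,\bpartial']$ followed by one positivity argument, with no induction. The gap is in your key claim. You assert that every term in the formula of Lemma \ref{eqn:Dbpartial} is built from zero modes and functions and hence preserves the grading. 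That is true of the first and third terms, but not of the middle term $\sum_{i,k}(Q_{(0)}:\mathbf b^k\theta_{ik}:)_{(0)}\bar b^i_{(0)}$: this is the full zero mode of a composite field whose coefficient $\theta_{ik}$ is a smooth, not antiholomorphic, $(0,1)$-form, and modes of smooth functions do not commute with the $\mathbf B$'s; acting on a state containing $\mathbf B$'s they generate exactly the $\Gamma^{s}$-corrections of Equation (\ref{eqn:thetaaf}), which lower the number of $\mathbf B$'s minus the number of $\mathbf \Gamma$'s. This is the very mechanism that makes $\bpartial$ differ from $I\circ \bpartial'\circ I^{-1}$ and produces the $F_i$ in the first place (cf.\ Lemma \ref{lem:vqual}: $\bpartial$ agrees, up to the action on coefficients, with the zero mode $v_{(0)}$ of a field of precisely this shape); if such zero modes preserved the grading, $\bar D$ itself would, and the lemma would be vacuous. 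So the unrestricted operator identity $[\bpartial'^*,F_i]=0$ on all of $\Omega^{0,*}_X(SW(\bar TX))$ does not follow, and on forms of positive antiholomorphic degree there is no reason for it to hold.

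Fortunately you only ever apply the identity to the section $a$ itself, which has antiholomorphic form degree $0$, and there the argument can be repaired: the offending middle term is pre-composed with $\bar b^i_{(0)}$, which annihilates $0$-forms, so on $0$-forms Lemma \ref{eqn:Dbpartial} gives $\bpartial'^*\bar D=\pm I^{-1}\bigl(\sum_i \bar\beta^i_{(0)}\mathbf B^i_{(0)}+\sum_{i,k}\theta_{ik}(\tfrac{\partial}{\partial\bar\gamma^i})\mathbf B^k_{(0)}\bigr)I$, and these two terms do preserve the grading, since $\mathbf B^i_{(0)}$ and multiplication by functions commute with the $SW^{\gamma}$-monomial part and act only on coefficients. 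Decomposing $\bpartial'^*\bar D=\bpartial'^*\bpartial'+\sum_i \bpartial'^*F_i$ on $0$-forms and matching degrees then yields both $\bpartial'^*F_i=0$ on $0$-forms and the Weitzenb\"ock-type identity identifying $\bpartial'^*\bpartial'$ with the displayed operator; your positivity step then goes through verbatim: $0=\bpartial'^*\bar D a=\bpartial'^*\bpartial' a$, hence $(\bpartial' a,\bpartial' a)_\lambda=0$, so $\bpartial' a=0$ and $\sum_i F_i a=\bar D a-\bpartial' a=0$. With this restriction made explicit your proof is correct and is in fact leaner than the paper's, consuming Ricci-flatness only through Lemma \ref{eqn:Dbpartial} and avoiding both the induction and the intermediate ``holomorphic implies parallel'' step; but as written, the claim that $[\bpartial'^*,\bar D]$ has degree zero as an operator on all forms is a genuine gap, and the restriction to form degree zero, which you never invoke, is exactly what closes it.
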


Let $\bpartial'^*$ be the dual operator of $\bpartial'$ under $(-,-)_\lambda$, then $\bpartial'^*=-H^{ij}\iota_{\frac{\partial}{\partial \bar \gamma^i}}\nabla^{1,0}_{\frac{\partial}{\partial \gamma^j}}.$ It does not depend on $\lambda$.

\begin{lemma}
	$$\bpartial'^*=-\sum_{i,j}H^{ij}\iota_{\frac{\partial}{\partial \bar \gamma^i}}\nabla^{1,0}_{\frac{\partial}{\partial \gamma^j}} =I^{*}(-\sum_i \iota_{\frac{\partial}{\partial \bar \gamma^i}}\mathbf B_{0}^i)$$
	is the pull back of the operator $-\sum_i \iota_{\frac{\partial}{\partial \bar \gamma^i}}\mathbf B_{0}^i$ through $I$.
\end{lemma}
\begin{proof}For any smooth section $a\otimes f\in SW^\gamma\otimes_{\mathbb C}\Omega^{0,*}_X(U)$, $\mathbf B_0^i$  and $\nabla^{1,0}$ commutes with $a\in SW^{\gamma}$. We have	
	$$I_{U}(\sum_j \nabla^{1,0}_{H^{ij}\frac{\partial}{\partial \gamma^j}}a\otimes f)=\mathbf B_{(0)}^iaf.$$
\end{proof}
\begin{lemma}\label{eqn:Dbpartial} If the K\"ahler metric of $X$ is Ricci-flat,
	$$[\sum_i \iota_{\frac{\partial}{\partial \bar \gamma^i}}\mathbf B_{0}^i,\bpartial]=\sum_i\bar \beta^i_{(0)}\mathbf B_{0}^i+\sum_{i,k}(Q_{(0)}:\mathbf b^k\theta_{ik}:)_{(0)}\bar b^i_{(0)}+\sum_{i,k}\theta_{ik}(\frac{\partial}{\partial\bar\gamma^i})\mathbf B^k_{(0)}.$$
	Here $\bar \beta^i_{(0)}=\frac{\partial}{\partial\bar\gamma^i}$ and $\bar b^i_{(0)}=\iota_{\frac{\partial}{\partial \bar \gamma^i}}$.
	In particular, for any $a\in SW^{\gamma}$, $$[\bpartial'^*,\bar D] a=I_{U}^{-1}([\sum_i \iota_{\frac{\partial}{\partial \bar \gamma^i}}\mathbf B_{0}^i,\bpartial] a1)=0.$$
\end{lemma}
\begin{proof}
	If the K\"ahler metric is Ricci-flat, i.e.
	$$\sum_{k,j} H^{ij}\frac{\partial}{\partial{\gamma^j} }\theta_{kl}(\frac{\partial}{\partial\bar\gamma^k})= \sum_{k,j} H^{kj}\frac{\partial}{\partial{\gamma^j} }\theta_{il}(\frac{\partial}{\partial\bar\gamma^k})=0.$$
	Then $\sum_k \theta_{kl}(\frac{\partial}{\partial\bar\gamma^k})$ is antiholomorphic.
	$$[\sum_i \iota_{\frac{\partial}{\partial \bar \gamma^i}}\mathbf B_{0}^i,\bpartial]=\sum_i \bar \beta^i_{(0)}\mathbf B_{0}^i+\sum_{i,k} \bar b^{i}_{(0)}(Q_{(0)}:\mathbf b^k\theta_{ik}:)_{(0)}$$
	$$=\sum_i\bar \beta^i_{(0)}\mathbf B_{0}^i+\sum_{i,k} (Q_{(0)}:\mathbf b^k\theta_{ik}:)_{(0)}\bar b^{i}_{(0)}+\sum_{i,k} (Q_{(0)}:\mathbf b^k\theta_{ik}(\frac{\partial}{\partial\bar\gamma^i}):)_{(0)}$$
	$$=\sum_i \bar \beta^i_{(0)}\mathbf B_{0}^i+\sum_{i,k} (Q_{(0)}:\mathbf b^k\theta_{ik}:)_{(0)}\bar b^{i}_{(0)}+\sum_{i,k}\theta_{ik}(\frac{\partial}{\partial\bar\gamma^i})\mathbf B^k_{(0)}.$$
\end{proof}

\begin{proof}[ of Lemma \ref{lem:barDequal}]
	$SW(\bar T^*X)$ is a direct sum of finite dimensional vector bundles  $SW(\bar T^*X)[k,l]$ and $\bar D$ preserve the conformal weights $k$ and fermion number $l$.
	We can assume $a=a_s+a_{s-1}+a_{s-2}+\cdots $ is a smooth section of $SW(\bar T^*X)[k,l]$ with $a_m$ is a smooth section of $SW(\bar T^*X)[k,l,m]$.

	$\bar D=\bpartial'+\sum_{i=1}^\infty F_i$. So if $\bpartial' a=0$ and $\sum_{i=1}^\infty  F_i a=0$, then  $\bar D a=0$.
	
	On the other hand,
	if $\bar D a=0$,
	$$0=(\bar D a, \bar D a)_\lambda=(\bpartial'a+\sum_{i=1}^\infty F_i a, \bpartial'a +\sum_{i=1}^\infty F_i a)_\lambda$$
	$$
	=\lambda^s(\bpartial'a_s,\bpartial'a_s)+\lambda^{s-1}(\bpartial' a_{s-1}+F_1 a_s, \bpartial' a_{s-1}+F_1 a_s)+\cdots.$$
	So $$(\bpartial' a_{m}+\sum_{i=1}^{s-m} F_i a_{m+i}, \bpartial' a_{s-1}+\sum_{i=1}^{s-m} F_i a_{m+i})=0.$$
	Let's show $\bpartial' a_m=0$ by induction.
	If $m=s$, $(\bpartial' a_s, \bpartial'a_s)=0$, so $\bpartial' a_s=0$.

	Assume for any $s\geq m>n$, $\bpartial' a_m=0$.
	Since $X$ is Ricci-flat and the mean curvature of $SW(\bar T^*X)[k,l]$ is zero.  $a_m$ are parallel sections of $SW(\bar T^*X)[k,l]$ and $\nabla a_m=0$. By Lemma \ref{eqn:Dbpartial},
	$$\bpartial'^* \sum_{i=1}^\infty F_i a_m=\bpartial'^* \bar D a_m=-\bar D\bpartial'^* a_m+[\bpartial'^*,\bar D] a_m=0.$$
	So $\bpartial'^* F_i a_m=0$ and
	$$0=(\bpartial' a_{n}+\sum_{i=1}^{s-n} F_i a_{n+i}, \bpartial' a_{n}+\sum_{i=1}^{s-n} F_i a_{n+i})=(\bpartial a_{n}, \bpartial a_{n})+(\sum F_i a_{n+i}, \sum F_i a_{n+i}).$$
	So $\bpartial a_{n}=0$. Inductively, we can get $\bpartial' a=0$ and $\sum_{i=1}^\infty  F_i a=\bar Da-\bpartial'a=0$.
\end{proof}

By the above lemma, $H^0(\Omega^{0,*}_X(SW(\bar T^*X))(X), \bar D)\subset H^0(X, SW(\bar T^*X))$.
\subsection{Holomorhic sections of $SW(\bar T^*X)$}

Since $X$ is Ricci-flat, the mean curvature of the finite dimensional vector bundle $SW(\bar T^*X)[k,l]$ vanishes. We have 
\begin{lemma} For a smooth section $a$ of $SW(\bar T^*X)[k,l]$,
	$$(\bpartial' a,\partial'a)=(\nabla^{1,0} a,\nabla^{1,0} a).$$
\end{lemma}
\begin{proof}$\bpartial'^*=-H^{ij}\iota_{\frac{\partial}{\partial \bar \gamma^i}}\nabla^{1,0}_{\frac{\partial}{\partial \gamma^j}}$ and ${\nabla^{1,0}}^*=-H^{ij}\iota_{\frac{\partial}{\partial \gamma^j}}\partial'_{i}$.
	Here $\partial'_{i}=[\iota_{\frac{\partial}{\partial \bar \gamma^i}},\bpartial']$.
	\begin{eqnarray*}\bpartial'^*\bpartial'a-{\nabla^{1,0}}^*{\nabla^{1,0}}a&=&-H^{ij}\iota_{\frac{\partial}{\partial \bar \gamma^i}}\nabla^{1,0}_{\frac{\partial}{\partial \gamma^j}}\bpartial'a+H^{ij}\iota_{\frac{\partial}{\partial \gamma^j}}\partial'_{i}{\nabla^{1,0}}a\\
		&=&-H^{ij}(\nabla^{1,0}_{\frac{\partial}{\partial \gamma^j}}\bpartial'_i-\bpartial'_i\nabla^{1,0}_{\frac{\partial}{\partial \gamma^j}})a=0,
	\end{eqnarray*}
	since the mean curvure $SW(\bar T^*X)[k,l]$ is zero.
\end{proof}
So we immediately have the following result (for example, see theorem 1.9 in page 52 of \cite{Ko}), which is part of a theorem of Bohner (\cite{YB}, p. 142).
\begin{corollary}\label{cor:parallel}
	For a smooth section $a$ of $SW(\bar T^*X)[k,l]$ is holomorphic, $\bpartial'a=0$, if and only if it is parallel, that is  $\nabla a=0$. 
\end{corollary}
For the parrallel section of  of $SW(\bar T^*X)[k,l]$, we have the following Proposition  (see Proposition 2.5.2 in \cite{J}).
\begin{proposition} \label{prop:1}Let $M$ be a manifold, and $\nabla$ a connection on $TM$. Fix $x\in M$, and let $H$ the honomomy group of $\nabla$. Then $H$ is a subgroup of $GL(T_xM)$. Let $E$ be the vector bundle  $\bigotimes^kTM\otimes \bigotimes^lT^*M$ over $M$. Then the connection $\nabla$on $TM$ induces a connection $\nabla^E$ on $E$, an $H$ has a natural representation on the fibre $E_x$ of $E$ at $x$.
	
	Suppose $S\in C^{\infty}(E)$ is a constant tensor, so that $\nabla^E S=0$. Then $S|_x$ is fixed by the action of $H$on $E_x$. Conversely, if $S_x\in E_x$ is fixed by the action of $H$, then there exists a unique tensor $S\in C^{\infty}(E)$ such that $\nabla^E S=0$ and $S|_x=S_x$.
\end{proposition}
Assume the holonomy group of $X$ is $G$, then
$SW(\bar T^*_xX)$ and $\cW_+(\bar T^*_xX)$ is a representaion of $G$. 

\begin{corollary} \label{cor:isorx} The map
	$$\tilde r_x: H^0(X,SW(\bar T^*X))\cong SW(\bar T^*_xX)^G.$$
	given by restricting the section $a$ of $SW(\bar T^*X)$ to a point $x\in X$, that is $\tilde r_x(a)=a|_x\in (SW(\bar T^*_xX))^G$ is an isomorphism.
\end{corollary}
\begin{proof} $SW(\bar T^*X)$ is a direct sum of $SW(\bar T^*X)[k,l]$. By Corollary \ref{cor:parallel}, $H^0(X,SW(\bar T^*X))$, the space of holomorphic sections of $SW(\bar T^*X)$, is the space of parallel section of $SW(\bar T^*X)$
	By Proposition \ref{prop:1}, the space of the parallel sections of $SW(\bar T^*X)[k,l]$ is isomorphic to the subspace of invariant elements under the action of holonomy group $G$ of $X$ on its fibre. so $\tilde r$ is an isomorphism. 
\end{proof}

Through the isomorphism $I$ from  $(\Omega^{0,*}_X(SW(\bar T^*X))(X), D)$ to $(\Omega^{ch,*}_X(X),\bpartial)$, we have the isomorphsim of their cohomology
$$I_0:  H^0(\Omega^{0,*}_X(SW(\bar T^*X))(X), D)\cong H^0(X,\Omega^{ch}_X).$$
Let
$$\Phi_x: SW(\bar T^*_xX)\cong \cW_+(\bar T^*_xX)$$ be the isomorphism given by (\ref{eqn:isopi}). $\Phi_x$ maps $SW(\bar T^*_xX)^G$ to $\cW_+(\bar T^*_xX)^G$. We can define the restriction
$$r_x: H^0(X, \Omega^{ch}_X)\to  \cW_+(\bar T^*_xX)^G,\quad a\mapsto\Phi_x\circ \tilde r_x\circ  I_0^{-1}(a).$$
We have

\begin{lemma}\label{lem:rbijection}
	$r_x$ is injective and it is a homomorphism of the vertex algebra.
\end{lemma}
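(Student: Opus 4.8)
The plan is to exhibit $r_x=\Phi\circ\tilde r_x\circ I^{-1}$ as a composite of maps that are simultaneously injective and compatible with all the vertex-algebra operations, so that both assertions of the lemma drop out together. The starting point is Lemma~\ref{lem:barDequal}: a global holomorphic section $a$ of $\Omega^{ch}_X$ corresponds under $I^{-1}$ to a section $s=I^{-1}(a)$ of $SW(\bar TX)$ with $\bpartial' s=0$ and $\sum_{i\ge 1}F_i s=0$. Since $X$ is Ricci-flat, the Bochner argument of the preceding paragraph upgrades $\bpartial' s=0$ to $\nabla s=0$, so $s$ is parallel. I would first record that this identifies $H^0(X,\Omega^{ch}_X)$ with a subspace of the parallel sections of $SW(\bar TX)$, and that $\tilde r_x$ restricts these isomorphically onto $(SW(\bar TX)|_x)^G$, because a parallel section is determined by its value at the single point $x$ and that fibre value is necessarily invariant under the holonomy group $G$.

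Injectivity is then immediate: $I^{-1}$ is an isomorphism, $\tilde r_x$ is injective on parallel sections by the rigidity just described, and $\Phi$ is a bundle isomorphism, so the composite $r_x$ is injective onto its image in $(\cW_+(\bar TX)|_x)^G$.

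For the vertex-algebra statement I would factor $r_x$ through a holomorphic coordinate chart $U\ni x$. The restriction map $H^0(X,\Omega^{ch}_X)\to\Omega^{ch}_X(U)$ is a homomorphism of vertex algebras simply because $\Omega^{ch}_X$ is a sheaf of vertex algebras. The essential observation is that its image lies inside the vertex subalgebra $\cW_+^\gamma=I(SW^\gamma\otimes 1)$ generated by $\mathbf b^i,\mathbf c^i,\mathbf\Gamma^i,\mathbf B^i$ with constant coefficients: writing $a|_U=a_U f$ with $a_U\in SW^\gamma$, the parallelism $\nabla I^{-1}(a)=0$ forces $f$ to be constant, which we normalize to $f=1$. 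The second factor is the fibre evaluation $\cW_+^\gamma\to\cW_+(\bar T_xX)$, an isomorphism of vertex algebras since $\cW_+^\gamma\cong\cW_+(V)$ as a vertex algebra and the fibre $\cW_+(\bar T_xX)$ is exactly the corresponding specialization. Composing two vertex-algebra homomorphisms shows $r_x$ is one as well.

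The main obstacle is precisely the reduction in the middle step: justifying that the local representative of a global holomorphic section is genuinely of the constant form $a_U\cdot 1$, so that every product $a_{(n)}b$ of global sections is computed entirely within $\cW_+^\gamma$ and hence commutes with evaluation at $x$. This is where Ricci-flatness is used twice—once through Bochner to pass from $\bpartial'$-closed to parallel, and once through the condition $\sum_{i\ge1}F_i s=0$ of Lemma~\ref{lem:barDequal} to kill the $\gamma_{(-1)}$-dependent, i.e. nonconstant, contributions that would otherwise obstruct landing in $\cW_+^\gamma$. Once this is secured, compatibility of fibre evaluation with normal ordering and the OPEs is automatic from the vertex-algebra isomorphism $\cW_+^\gamma\cong\cW_+(V)$, and no further computation is needed.
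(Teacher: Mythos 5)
Your proof is correct and follows essentially the same route as the paper: Lemma \ref{lem:barDequal} plus the Bochner argument give parallelism, injectivity follows because a parallel section is determined by its fibre value at $x$, and parallelism forces the local representative into $I(SW^{\gamma}\otimes 1)$, so that $r_x$ factors as the restriction $H^0(X,\Omega^{ch}_X)\to I(SW^{\gamma}\otimes 1)\subset \Omega^{ch}_X(U)$ followed by fibre evaluation, both homomorphisms of vertex algebras. One small correction to your closing remark: the constancy of $f$ comes from $\nabla I^{-1}(a)=0$ alone (i.e.\ from $\bpartial' I^{-1}(a)=0$ via Bochner), not from the condition $\sum_{i\geq 1}F_i\, I^{-1}(a)=0$, which plays no role in this lemma and is only exploited later in the proof of Theorem \ref{thm:global}.
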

\begin{proof}$I_0$, $\tilde r_x$ and $\Phi_x$ are isomorphisms, so $r_x$ is injective. 
	Any holomorphic section $a$ of $\Omega^{ch}_X$, locally on $U\ni x$ , can be written as $a|_U=a_Uf$, for $a_U\in SW^{\gamma}$ and $f$ is a smooth function on $U$. By lemma \ref{lem:barDequal}, $\bpartial' I_{0}^{-1}(a)=0$. So it is a parallel section, $\nabla I_0^{-1}(a)=0$, which implies that $f$ is a constant. We can assume $f=1$. So $r_x(a)=a_U|_x 1_x\in \cW_+(\bar T^*X)|_x$, which is the composition of the restriction of the global sections to the open set $U$, $H^0(X, \Omega^{ch}_X) \to I_{U}(SW^{\gamma}\otimes 1) \subset \Omega^{ch}_X(U)$ and the isomorphism  $I_{U}(SW^{\gamma}\otimes 1)\to I_{U}(SW^{\gamma}\otimes 1)|_x=\cW_+(\bar T^*_xX) $. So $r_x$ is a homomorphism of the vertex algebra.
\end{proof}
Through the K\"ahler metric on $X$, there is a canonical isomorphsim of vector space
$$\psi_x: \bar T^*_xX \to T_xX,\quad  d\bar \gamma_i|_x \mapsto H^{ij}\frac{\partial}{\partial \gamma_j}|_x .$$
$\psi_x$ induces an isomorphism of vertex algebra 
$\cW(\psi_x): \cW(\bar T^*_xX)\to \cW(T_xX)$.
$\cW(\psi_x)$ is equivariant under the action of holonomy group $G$.
which maps $G$ invariant elements to $G$ invariant elements.
Let 
$$\bar r_x: H^0(X, \Omega^{ch}_X)\to  \cW_+( T_xX)^G,\quad a\mapsto  \cW(\psi_x)( r_x(a)).$$
Then $\bar r_x$ is an injective homomorphism of vertex algebra.
\subsection{Global sections: special cases}

If the holonomy group of $X$ is $SU(d)$ and $w_0$ is a nowhere vanishing $d$ holomorphic form of $X$.  Let $\omega_0^x=\psi_x^*(w_0|_x)$ be the pullback of $w_0|_x$. By Lemma \ref{lem:isopsi}, through ${\psi_x}$, we can get the isomorphism 
$$\cW(\bar T^*_xX)^{\mathcal Vect(\bar T^*_xX, \omega_0^x)}\cong \mathcal W(T_xX)^{\mathcal Vect(T_xX, w_0|_x)}.$$
There is a linear isomorphism, $\phi^0_x: T_xX\to V$, such that ${\phi^0_x}^*(\omega_0)= w_0|_x$. By Lemma \ref{lem:isopsi}, through ${\phi^0_x}$, we can get the isomorphism 
\begin{equation}\label{eqn:isow0}
\cW(T_xX)^{\mathcal Vect(T_xX, w_0|_x)}\cong\cW(V)^{\mathcal Vect(V, \omega_0)}.
\end{equation}
$\mathcal Vect_0(\bar T^*_xX, \omega_0^x)$ is the complexification of the Lie algebra of $SU(d)$.   The action of
$\mathcal Vect_0(\bar T^*_xX, \omega_0^x)$ on $\cW_+(\bar T^*_xX)$ given by $\cL$ in Equation (\ref{eqn:actionL}) is exactly induced from the action of $SU(d)$ on $\cW_+(\bar T^*_xX)$.
So $$\cW_+(\bar T^*_xX)^{SU(d)}=\cW_+( \bar T^*_xX)^{\mathcal Vect_0(\bar T^*_xX, \omega_0^x)}.$$

Similarly, if $d=2l$ is even and
the holonomy group of $X$ is $Sp(l)$. Let  $w_1$ be a holomorphic symplectic form  of $X$.   Let $\omega_1^x=\psi_x^*(w_1|_x)$ be the pullback of $w_1|_x$.
By Lemma \ref{lem:isopsi}, through ${\psi_x}$, we can get the isomorphism 
$$\cW(\bar T^*_xX)^{\mathcal Vect(\bar T^*_xX, \omega_0^x)}\cong \mathcal W(T_xX)^{\mathcal Vect(T_xX, w_0|_x)}.$$
There is a linear isomorphism, ${\phi^1_x}: \bar T^*_xX\to V$, such that ${\phi^1_x}^*(\omega_1)= \omega_1^x$. By Lemma \ref{lem:isopsi}, through ${\phi^0_x}$, we can get the isomorphism 
$$\cW(T_xX)^{\mathcal Vect(T_xX, w_1|_x)}\cong\cW(V)^{\mathcal Vect(V, \omega_1)}.$$
$\mathcal Vect_0(\bar T^*_xX, \omega_1^x)$ is the complexification of the Lie algebra of $Sp(l)$. The action of
$\mathcal Vect_0(\bar T^*_xX, \omega_1^x)$ on $\cW_+(\bar T^*_xX)$ given by $\cL$ in Equation (\ref{eqn:actionL}) is exactly induced from the action of $Sp(l)$ on $\cW_+(\bar T^*_xX)$.
So $$\cW_+(\bar T^*_xX)^{Sp(l)}=\cW_+( \bar T^*_xX)^{\mathcal Vect_0(\bar T^*_xX, \omega_1^x)}.$$

Let $(z^1,\cdots z^d)$ be the dual basis of $(\frac{\partial}{\partial\bar\gamma_1},\cdots,\frac{\partial}{\partial\bar\gamma_1})$.
Let
$$v_{n,k}=\sum_{s\in \mathcal Z, |s|=n+1}\frac 1 {s!}z^s(\iota_{\frac{ \partial}{\partial \bar\gamma^k}}\theta(s,j))|_x \frac{\partial}{\partial z^j} \in \mathcal Vect_n(\bar T^*_xX).$$ 
\begin{lemma} \label{lemma:eqnaction}
	For $a\in SW^{\gamma}$,
	\begin{equation}\label{eqn:LFn1}\mathcal L^+(v_{n,k}) (a1)|_x=\Phi_x(\iota_{\frac{ \partial}{\partial \bar\gamma^k}}F_n(a))|_x).
	\end{equation}
\end{lemma}
\begin{proof}
	Since the K\"ahler metric is Ricci-flat, $\sum_{j}\mathbf B^j_{(0)}\theta(s,j)=0$.
	$$\sum_{j}[\Gamma^s_{(-1)}\mathbf B^j_{(0)}, a]\theta(s,j)=[\Gamma^s_{(-1)}, a]\sum_{j}\mathbf B^j_{(0)}\theta(s,j)=0.$$
	By Equation (\ref{eqn:Fn}), 
	\begin{eqnarray*}I_{U}(F_n(a))&=&\sum_{i,j}\sum_{s\in\mathcal Z, |s|=n}\frac{1}{s!} [::\Gamma^s\mathbf c^{i}:\mathbf b^j:_{(0)}, a]\theta(s+e_i,j)
		\nonumber \\
		&+& \sum_{j}\sum_{s\in\mathcal Z, |s|=n+1} \frac{1}{s!}[:\Gamma^s\mathbf B^j:_{(0)}, a]\theta(s,j).
	\end{eqnarray*}
	Thus
	\begin{eqnarray*}\Phi_x(\iota_{\frac{ \partial}{\partial \bar\gamma^k}}F_n(a))|_x)&=&\sum_{i,j}\sum_{s\in\mathcal Z, |s|=n}(\frac{1}{s!} ::\Gamma^s\mathbf c^{i}:\mathbf b^j:_{(0)}(a1))|_x (\iota_{\frac{ \partial}{\partial \bar\gamma^k}}\theta(s+e_i,j))|_x
		\nonumber \\
		&+& \sum_{j}\sum_{s\in\mathcal Z, |s|=n+1}( \frac{1}{s!}:\Gamma^s\mathbf B^j:_{(0)}(a1))|_x(\iota_{\frac{ \partial}{\partial \bar\gamma^k}}\theta(s,j))|_x.\\
		&=& \mathcal L^+(v_{n,k}) (a1)|_x.
	\end{eqnarray*}
\end{proof}

\begin{thm}\label{thm:global}If $X$ is a $d$ dimensional compact K\"ahler manifold with holonomy group $G=SU(d)$ and $w_0$ is a nowhere vanishing holomorphic $d$ form, then
	$$H^0(X,\Omega_X^{ch,*})\cong \cW_+( T_xX)^{\mathcal Vect(T_xX, w_0|_x)};$$
	If $X$ is a $d$ dimensional compact K\"ahler manifold with holonomy group $G=Sp(\frac d 2)$ and $w_1$ is a holomorphic symplectic form, then
	$$H^0(X,\Omega_X^{ch,*})\cong  \cW_+(T_xX)^{\mathcal Vect( T_xX,w_1|_x)}.$$
	The isomorphisms are given by $ \bar r_x$.
\end{thm}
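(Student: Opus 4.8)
The plan is to prove that the restriction map $r=r_x$, which by Lemma \ref{lem:rbijection} is an injective homomorphism of vertex algebras, is in fact a bijection onto $\bW(\bar T_xX)^{\mathcal Vect(\bar T_xX,\bar\omega)}$; the $SU(N)$ and $Sp(\tfrac N2)$ cases run in parallel, so I write $\mathfrak g=\mathcal Vect^A(V)$ (resp. $\mathcal Vect^C(V)$) and $\mathfrak g_0,\mathfrak g_1$ for its graded pieces. By Lemma \ref{lem:wequalwplus} the target equals $\cW(\bar T_xX)^{\mathfrak g}$, so it is enough to establish the two inclusions $\im r\subseteq \bW(\bar T_xX)^{\mathcal Vect(\bar T_xX,\bar\omega)}$ and $\im r\supseteq \bW(\bar T_xX)^{\mathcal Vect(\bar T_xX,\bar\omega)}$; once $r$ is a bijective vertex-algebra homomorphism it is automatically an isomorphism.

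The key preliminary step is a local dictionary making Equation (\ref{eqn:LFn1}) (resp. (\ref{eqn:LFn2})) applicable to honest global sections. Given $a\in H^0(X,\Omega_X^{ch,*})$, set $\hat a=I^{-1}(a)$. By Lemma \ref{lem:barDequal} the equation $\bar D\hat a=0$ is equivalent to $\bpartial'\hat a=0$ together with $\sum_{i\ge1}F_i\hat a=0$; since $X$ is Ricci-flat the Bochner argument forces $\hat a$ to be parallel, so $r(a)=\Phi(\hat a(x))$ is invariant under the complexified holonomy $\mathfrak g_0=\mathcal Vect_0(\bar T_xX,\bar\omega|_x)$. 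I would then work in normal coordinates centred at $x$, where $\theta_{ij}|_x=0$; there the parallel section $\hat a$ and the constant-coefficient section $\hat a(x)\cdot 1$ share the same $1$-jet at $x$, and as every $F_n$ is first order this gives $(F_n\hat a)|_x=(F_n(\hat a(x)\cdot1))|_x$. Consequently Equation (\ref{eqn:LFn1}) reads $\mathcal L^+(v_{n,j})\,r(a)=\Phi\big((\iota_{\partial/\partial\bar\gamma^j}F_n\hat a)|_x\big)$ for all $n,j$.

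For the inclusion $\im r\subseteq \bW(\bar T_xX)^{\mathcal Vect(\bar T_xX,\bar\omega)}$, I would sum this identity over $n$ and use $\sum_nF_n\hat a=0$ to obtain $\sum_n\mathcal L^+(v_{n,j})\,r(a)=0$ for every $j$. This couples different $m$-degrees, so I extract genuine invariance by peeling: writing $w=r(a)=\sum_m w_m$ with $w_m$ its $[k,l,m]$-component and recalling that $\mathcal L^+(v_{n,j})$ lowers $m$ by $n$, comparison of top $m$-degrees forces $\mathcal L^+(v_{1,j})w_s=0$ for all $j$, while $w_s$ is $\mathfrak g_0$-invariant. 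Because the holonomy is the full $SU(N)$ (resp. $Sp(\tfrac N2)$), the curvature entering the $v_{1,j}$ does not degenerate, so some $v_{1,j}$ is a non-zero element of $\mathfrak g_1$; Lemma \ref{lem:wequalplus} then upgrades $w_s$ to a fully $\mathfrak g$-invariant vector. Substituting this back annihilates the $\mathcal L^+(v_{n,j})w_s$ terms in the next graded component and yields $\mathcal L^+(v_{1,j})w_{s-1}=0$, and a downward induction shows every $w_m$, hence $w$, lies in $\cW(\bar T_xX)^{\mathfrak g}$.

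For the reverse inclusion I start from $w\in\cW(\bar T_xX)^{\mathfrak g}$. Its $\mathfrak g_0$-invariance makes $\Phi^{-1}(w)$ holonomy-invariant, so the parallel-section isomorphism $\tilde r_x$ produces a parallel $\hat a\in\Omega^{0,0}_X(SW(\bar TX))(X)$ with $\hat a(x)=\Phi^{-1}(w)$ and $\bpartial'\hat a=0$; it remains to check $\bar D\hat a=\sum_iF_i\hat a=0$. Full invariance of $w$ gives $\mathcal L^+(v_{n,j})w=0$, whence the dictionary yields $(\iota_{\partial/\partial\bar\gamma^j}F_n\hat a)|_x=0$ and so $(\bar D\hat a)|_x=0$. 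To globalise I use that $\omega$ is parallel: parallel transport $\bar T_xX\to\bar T_yX$ sends $\bar\omega|_x$ to $\bar\omega|_y$, hence intertwines the $\mathcal Vect(\bar T_\bullet X,\bar\omega)$-actions on the fibres of $\bW(\bar TX)$, so $\Phi(\hat a(y))$ is invariant for every $y$ and the same computation gives $(\bar D\hat a)|_y=0$. Thus $\bar D\hat a=0$, $\hat a\in H^0$ and $r(\hat a)=w$. The main obstacle throughout is exactly the gap between the analytic equation $\sum_iF_i\hat a=0$ and the algebraic fibrewise invariance: Equation (\ref{eqn:LFn1}) only links $\mathcal L^+(v_{n,j})$ to $F_n$ on constant sections and only after summation, so the delicate points are the normal-coordinate matching of $1$-jets, the $m$-graded downward induction together with the non-vanishing of some $v_{1,j}$ (where the full holonomy hypothesis is essential), and the parallel-transport step promoting $(\bar D\hat a)|_x=0$ to $\bar D\hat a\equiv0$.
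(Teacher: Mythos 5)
Your proposal is correct and takes essentially the same route as the paper: injectivity of $r$ (Lemma \ref{lem:rbijection}), Lemma \ref{lem:barDequal} plus the Bochner argument to get parallel sections, the dictionary (\ref{eqn:LFn1})/(\ref{eqn:LFn2}) between $F_n$ and $\mathcal L^+(v_{n,j})$, top-degree peeling with a nonzero $v_{1,j_0}$ (full holonomy) upgraded via Lemmas \ref{lem:wequalplus} and \ref{lem:wequalwplus}, and parallel transport of invariance to every point for the surjectivity direction. The only deviations are cosmetic: your normal-coordinate $1$-jet matching spells out what the paper does by writing a parallel section locally as $a_U\otimes 1$, and your downward induction stays entirely on the algebra side (killing the $\mathcal L^+(v_{n,j})w_m$ terms degree by degree), whereas the paper instead concludes at each stage that $a_s\in H^0(X,\Omega_X^{ch,*})$ and subtracts it.
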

\begin{proof}
	By Lemma \ref{lem:isopsi}, $\cW(\psi_x)$ gives isomorphisms of vertex algebras
	$$\cW_+(\bar T^*_xX)^{\mathcal Vect(\bar T^*_xX, \omega_i^x)}\cong \cW_+( T_xX)^{\mathcal Vect(T_xX, w_i|_x)}.$$ We only need to show that 
	$r_x$ gives isomorphism $$H^0(X,\Omega_X^{ch,*})\cong  \cW_+(\bar T^*_xX)^{\mathcal Vect( \bar T^*_xX,\omega_i^x)}.$$

	Assume the holonomy group of $X$ is $SU(d)$. By Lemma \ref{lem:rbijection}, we only need to show that the image of $r_x$ is $\cW_+(\bar T^*_xX)^{\mathcal Vect(\bar T^*_xX, \omega_0^x)}$.
	If $I_0^{-1}(a)$ is a holomorphic section of $SW(\bar T^*X)$  with $r_x(a)$ is $\mathcal Vect(\bar T^*_xX,\omega_0^x)$ invariant.  $\nabla I_0^{-1}(a)=0$, $I_0^{-1}(a)$ and $w_0$ are parallel sections, so for any $y\in X$, $r_y(a)$ is $\mathcal Vect(\bar T^*|_y,\omega_0^y)$ invariant. Let $(U,\gamma)$ be a coordinate system with $y\in U$, then $I_0^{-1}(a)|_U\in SW^\gamma$.
	%Locally, $I^{-1}(a)|_U=a_U\otimes 1$, $a_U\in SW^{\gamma}$.
	%$$\tilde r_y (I^{-1}(a)))=a_U1|_y \in{\mathcal W_+}^{\mathcal Vect(\bar T^*|_y,\bar\omega|_y)}.$$
	We can see that
	$v_{i,k}\in \mathcal Vect(\bar T^*|_y,\omega_0^y)$ by its definition.
	By Equation (\ref{eqn:LFn1}),
	$$\Phi_y(\iota_{\frac{ \partial}{\partial \bar\gamma^k}} \bar DI_0^{-1}(a)|_y))=\Phi_y(\iota_{\frac{ \partial}{\partial \bar\gamma^k}}\sum_i F_i I_0^{-1}(a) |_y)=  \sum_i\mathcal L^+(v_{i,k})r_y(a)=0.
	$$
	So $\bpartial a=0$ and $a\in H^0(X,\Omega_X^{ch,*})$.
	
	On the other hand,
	if $a\in H^0(X,\Omega_X^{ch,*})$, without loss of generality, we can assume
	$a=a_s+a_{s-1}+\cdots$ with $a_m$ are  smooth sections of  $SW(\bar T^*X)[k,l,m]$. Then by lemma \ref{lem:barDequal}, $\bpartial a_s=0$ and $F_1 a_s=0$.
	So
	$r_x(a_s)\in \cW_+(\bar T^*_xX)^{\mathcal Vect(\bar T^*_xX,\omega^x_0)}$ and
	$$\mathcal L^+(v_{1,j})r_x(a_s)=\Phi_x(\iota_{\frac{ \partial}{\partial \bar\gamma^j}} F_1I_0^{-1}(a_s)|_x)=0.$$
	Since the holonomy group of $X$ is $SU(d)$, the curvature is not zero.  So there is some $j_0$, $1\leq j_0\leq d$, $v_{1,j_0}\neq 0$.
	By Lemma \ref{lem:wequalplus},
	$r_x(a_s)\in  \cW_+(\bar T^*_xX)^{\mathcal Vect(\bar T^*_xX,\omega^x_0)}$. By previous argument, we know that $a_s\in H^0(X,\Omega_X^{ch,*})$,
	so 
	$$a-a_s=a_{s-1}+\cdots \in H^0(X,\Omega_X^{ch,*}).$$ By induction, we can show $r_x(a_m)\in   \cW_+(\bar T^*_xX)^{\mathcal Vect(\bar T^*_xX,\omega^x_0)}$ for $m\leq s$.
	So $r_x(a)\in   \cW_+(\bar T^*_xX)^{\mathcal Vect(\bar T^*_xX,\omega^x_0)}$.
	
	The proof of the theorem for the case $G=Sp(\frac d 2)$ is similar.
\end{proof}

\begin{corollary} \label{cor:equal}
	If $\dim V=2$, $\cW(V)^{\mathcal Vect(V, \omega_0)}=\cA_0(V).$
\end{corollary}
\begin{proof}
	According to \cite{S}\cite{S1}, $H^0(X,\Omega_X^{ch,*})$, the space of global sections of chiral de Rham complex of a K3 surface $X$, is generated by eight sections $Q,L,G,J,B,C,D,E$. So it is isomorphic to $\cA_0(V)$. 
	By Theorem \ref{thm:global} and Equation (\ref{eqn:isow0}), $H^0(X,\Omega_X^{ch,*})$ is isomorphic to
	$\cW(V)^{\mathcal Vect(V, \omega_0)}$. So if $\dim V=2$, $\cW(V)^{\mathcal Vect(V, \omega_0)}=\cA_0(V).$
\end{proof}
\subsection{Global sections: general case}
For a compact Ricci-flat K\"ahler manifold, we have the following properties (Proposition 6.22, 6.23 in \cite{J}).
\begin{proposition}\label{prop:ricciflat1}
	Let $X$ be a compact Ricci-flat K\"ahler manifold. Then $X$ admits a finite cover  isomorphic to the product K\"ahler manifold
	$T^{2l}\times X_1\times X_2\cdots\times X_k$, where $T^{2l}$ is a flat K\"ahler torus and $X_j$ is a compact, simply connected, irreducible, Ricci-flat K\"ahler manifold for $j=1,\cdots,k$.
\end{proposition}
\begin{proposition}\label{prop:ricciflat2}
	Let $X$ be a compact, simply-connected, irreducible, Ricci-flat K\"ahler manifold of dimension $d$. Then either $d\geq 2$ and its holonomy group is $SU(d)$, or $d\geq 4$ is even and its holonomy group is $Sp(\frac d 2)$. Conversely, if $X$ is a compact K\"ahler manifold and its holonomy group is $SU(d)$ or $Sp(\frac d 2)$, then $X$ is Ricci-flat and irreducible and $X$ has finite fundamental group.
\end{proposition}

If $X$ ia s compact Ricci-flat K\"ahler manifold with its holonomy group $G$. Let $Y=T^{2l}\times X_1\times X_2\cdots\times X_k$ be the finite cover of $X$ in Proposition \ref{prop:ricciflat1}. Assume the dimension of $X_i$ is $d_i$.  By Proposition \ref{prop:ricciflat2}, we can assume  the holonomy group of $X_i$, $1\leq i\leq n$, is  $SU(d_i)$ and the holonomy group of $X_j$, $n<j\leq k$, is  $Sp(\frac {d_j} 2)$. Let $\omega_i$, $1\leq i\leq n$, be nowhere vanishing holomorphic $d_i$ forms of $X_i$ and $\omega_j$, $n<j\leq k$, be holomorphic symplectic forms of $X_j$.
Let $G_0$ be the restricted holonomy group of $X$. $G_0$  is the holonomy gourp of $Y$ and it is a normal subgroup of $G$. Let $H=G/G_0$.  It is a finite group. 

Let $p:Y\to X$ be the covering map, $p$ induce isomorhisms of vector spaces
$p_y: T_yY\to T_{p(y)}X$ and $p^y: \bar T^*_{p(y)}X\to \bar T^*_yY$, and the isomorphisms are $G_0$ equivariant. So $H$ can act onf $\cW(T_yY)^{G_0}$ and $\cW(\bar T^*_yY)^{G_0}$ through the isomorphism $\cW(p_y)$ and $\cW(p^y)$ respectively. 

\begin{corollary}\label{cor:global}If $X$ is a compact Ricci-flat K\"ahler manifold,
	$p:Y\to X$ is the covering map in Proposition \ref{prop:ricciflat1}. Then \begin{eqnarray*}
		H^0(X, \Omega^{ch}_X)&\to&
		\cW_+( T_{x_0}T^{2l})\otimes (\otimes_{i=1}^k\cW(T_{x_i}X_i)^{\mathcal Vect( T_{x_i}X_i,w_i|_{x_i})})^H\\
		a\quad\quad &\mapsto&\quad \quad \cW(p_y)^{-1}(\bar r_{p(y)})	
	\end{eqnarray*}
	for any $y=(x_0,x_1,x_2,\cdots, x_k)\in Y=T^{2l}\times X_1\times X_1\cdots\times X_i$.
\end{corollary}
\begin{proof}	For a flat K\"ahler torus $T^{2l}$, we have the isomorphism $$\bar r_{x_0} :H^0(T^{2l},  \Omega^{ch}_{T^{2l}})\to \cW_+(T_{x_0}T^{2l}),$$ for any $x_0 \in T^{2l}$.
	By Theorem \ref{thm:global},
	$$H^0(Y, \Omega^{ch}_Y)=H^0(T^{2l}, \Omega^{ch}_{T^{2l}})\otimes H^0(X_1, \Omega^{ch}_{X_1})\otimes\cdots\otimes H^0(X_k, \Omega^{ch}_{X_k})$$
	$$\cong \cW_+( T_{x_0}T^{2l})\otimes \cW(T_{x_1}X_1)^{\mathcal Vect( T_{x_1}X_1,w_1|_{x_1})}\otimes\cdots\otimes \cW( T_{x_k}X_k)^{\mathcal Vect( T|_{x_k}X_k,w_k|_{x_k})}.$$ 
	We have the following commutative diagram:
	$$\begin{array}{ccc}
	H^0(X, SW(\bar T^*X))&\xrightarrow{p^*} & H^0(Y, SW(\bar T^*Y))\\ 
	\downarrow \tilde r_{p(y)}&  &\downarrow \tilde r_y \\ 
	SW(\bar T^*_{p(y)}X)^G	& \rightarrow & SW(\bar T^*_yY)^{G_0}
	\end{array} 
	$$
	Here $y=(x_0,\cdots,x_k)\in Y$.
	Any holomorphic section $\tilde a \in  H^0(Y, SW(\bar T^*Y))$ is in the image of $p^*$  if and only of $\tilde r_y(a)$ is $G$ invariant. $\tilde r_y(a)$ is 
	$G_0$ invariant, so $\tilde a $ is in the image of $p^*$ if and ony if  $\tilde r_y(a)$ is $H$ invariant. Assume $\tilde a=p^* (a)$, then $\bar D a=0$ if and only if $\bar D \tilde a=0$, since $p$ is a covering map. By Lemma \ref{lem:barDequal}, $p^*$ induces an isomorphism between $H^0(\Omega^{0,*}_X(SW(\bar T^*X))(X), \bar D)$ and 
	$$\{\tilde a\in H^0(\Omega^{0,*}_Y(SW(\bar T^*Y))(Y), \bar D)|\tilde r_y(\tilde a) \text{ is } H \text { invariant}\} .$$ 
	Through the isomorphism
	$$H^0(\Omega^{0,*}_X(SW(\bar T^*X))(X), \bar D)\cong H^0(X,\Omega^{ch}_X), \quad  SW(\bar T^*_{p(y)}X)\cong \cW_+(\bar T^*_{p(y)}X) $$ and 
	$$H^0(\Omega^{0,*}_Y(SW(\bar T^*Y))(Y), \bar D)\cong H^0(Y,\Omega^{ch}_Y),\quad SW(\bar T^*_yY)\cong \cW_+(\bar T^*_yY),$$
	we can see, $p$ induces an isomorphism 
	from $H^0(X,\Omega^{ch}_X)$ to  $$\{a\in H^0(Y,\Omega^{ch}_Y)|r_y(a)\text{ is } H \text { invariant}\}= \{a\in H^0(Y,\Omega^{ch}_Y)|\bar r_y(a)\text{ is } H \text { invariant}\} $$
	So we show the corollary.
\end{proof}

For $y=(x_0,\cdots, x_k)\in Y$, 
let $$\mathcal Vect(T_yY, G_0)=\mathcal Vect_{-1}(T_{x_0}T^{2l})\oplus (\oplus_{i=0}^k\mathcal Vect( T_{x_i}X_i,w_i|_{x_i})).$$
Then 
$$\cW(T_yY)^{\mathcal Vect(T_yY, G_0)}= 
\cW_+( T_{x_0}T^{2l})\otimes (\otimes_{i=1}^k\cW(T_{x_i}X_i)^{\mathcal Vect( T_{x_i}X_i,w_i|_{x_i})}).$$
$\mathcal Vect( T|_{x_i},w_i|_{x_i})$ is determinted by $w_i|_{x_i}$ according to its definition and $w_i|_{x_i}$ can be determined by the action of the holonomy group  $SU(d_i)$ (or $Sp(\frac {d_i} 2)$) on $T_{x_i}X$. So the Lie algebra $\mathcal Vect(T|_yY, G_0)$
can be determined by the action of the holonomy group $G_0$ on $T_yY$. Similarly, 
The action of $G$ on $T_{p(y)}X$ will determine a Lie algebra $\mathcal Vect(T|_{p(x)}X, G_0)$. $\cW(p_y)$ gives an isomorhism 
$$\cW(T_yY)^{\mathcal Vect(T_yY, G_0)}\cong \cW(T_{p{(y)}}X)^{\mathcal Vect(T_{p(y)}X, G_0)}$$
So
\begin{corollary}\label{cor:isory}
	$$	\bar r_{p(y)}: H^0(X, \Omega^{ch}_X)\to (\cW(T_{p{(y)}}X)^{\mathcal Vect(T_{p(y)}X, G_0)})^H$$ is an isomorphism of vertex algebra.
\end{corollary}  
According to this corollary,  $H^0(X, \Omega^{ch}_X)$ is independent on the manifold $X$. It is only denpend on the representation $T_xX$ of the holonomy group $G$ of $X$. In particular, If $G$ is connected, then  $H^0(X, \Omega^{ch}_X)$ is only dependent on $G$.

\end{document}